\newtheorem{theorem}{Theorem}[section]
\newtheorem{corollary}[theorem]{Corollary}
\newtheorem{proposition}[theorem]{Proposition}
\newtheorem{example}[theorem]{Example}
\newtheorem{lemma}[theorem]{Lemma}
\def\qed{\vbox{\hrule
 \hbox{\vrule\hbox to 5pt{\vbox to 8pt{\vfil}\hfil}\vrule}\hrule}}
\def\endproof{\unskip \nobreak \hskip0pt plus 1fill \qquad \qed \par}
\newcommand{\Span}{\mbox{\rm span}}
\newcommand{\rank}{\mbox{\rm rank$\;$}}
\newcommand{\mb}{\mathbb}
\newcommand{\mc}{\mathcal}
\begin{document}

\title{Diagonal Sums of Doubly Stochastic Matrices}

\author{
 Richard A. Brualdi\footnote{Department of Mathematics, University of Wisconsin, Madison, WI 53706, USA. {\tt brualdi@math.wisc.edu}} \\
 Geir Dahl\footnote{Department of Mathematics,  
 University of Oslo, Norway.
 {\tt geird@math.uio.no.} Corresponding author.}
 }
\date{7 December 2020}
\maketitle

\begin{abstract}  Let $\Omega_n$ denote the class of $n \times n$ doubly stochastic matrices (each such matrix is entrywise nonnegative and every row and column sum is 1). We  study the diagonals of matrices in $\Omega_n$. The main question is: which  $A \in \Omega_n$ are such that the diagonals in $A$  that avoid the zeros of $A$ all have the same sum of their entries. We give a characterization of such matrices, and establish several classes of patterns of such matrices. 
\end{abstract}


\noindent {\bf Key words.} Doubly stochastic matrix, diagonal sum, patterns.

\noindent
{\bf AMS subject classifications.} 05C50, 15A15.

\section{Introduction}
Let $M_n$ denote the (vector) space of real $n \times n$ matrices and on this space we consider the usual scalar product $A \cdot B=\sum_{i,j} a_{ij}b_{ij}$ for $A, B \in M_n$, $A=[a_{ij}]$, $B=[b_{ij}]$. 

A permutation $\sigma=(k_1, k_2, \ldots, 
k_n)$ of $\{1,2,\ldots,n\}$ can be identified with an $n\times n$ permutation matrix $P=P_{\sigma}=[p_{ij}]$   by defining $p_{ij}=1$, if $j=k_i$,  and $p_{ij}=0$, otherwise. 
If $X=[x_{ij}]$ is  an $n\times n$ matrix, the entries of $X$ in the positions of $X$ in which  $P$ has a 1 is the {\it diagonal} $D_{\sigma}$ of $X$ corresponding to $\sigma$ and $P$, and their sum 
\[
   d_P(X)=\sum_{i=1}^n x_{i,k_i}
   \]
is a {\it diagonal sum} of $X$. Sometimes we refer to the set of positions as a diagonal of $X$. Permutations $\sigma_1,\sigma_2,\ldots,\sigma_k$  of $\{1,2,\ldots,n\}$, and 
 their corresponding permutations matrices,  
are {\it pairwise disjoint} provided no two of them agree in any position; equivalently, no two of  their corresponding permutation matrices have a 1 in the same position. We also then say that the associated diagonals are pairwise disjoint. A {\it zero diagonal} of $X$  is a diagonal of $X$ with  0's in all its positions. Without some restriction on the entries of $X$, diagonal sums can be quite arbitrary.

Let $X=[x_{ij}]$ be a {\it doubly stochastic matrix}. Thus $X$ is a square nonnegative real matrix with all row and column sums equal to 1:
\[x_{ij}\ge 0\ (i,j=1,2,\ldots,n) \mbox{ and }
\sum_{i=1}^nx_{ij}=\sum_{i=1}^nx_{ji}=1\ (j=1,2,\ldots,n).\]
The  set of $n\times n$ doubly stochastic matrices is denoted by $\Omega_n$. $\Omega_n$ is a polytope of dimension $(n-1)^2$ in the space $M_n$ of real matrices of order $n$ with the standard inner product.

A $n \times n$ matrix $A$ is {\em partly decomposable} if suitable permutations of its rows and columns give a matrix 
\[
\left[
 \begin{array}{rr}
   A_1 & A_2 \\
   O & A_3 
 \end{array}
 \right]
\] 
where $A_1$ and $A_3$ are square, non-empty matrices. If $A$ is not partly decomposable, it is called {\em fully indecomposable}. If a doubly stochastic matrix is partly decomposable then, after suitable row and column permutations,  it is a direct sum of fully indecomposable doubly stochastic matrices. Thus in studying properties of doubly stochastic matrices, one usually assumes that they are fully indecomposable.

 Sinkhorn  \cite{Sink} and Balasubramanian \cite{Bala} independently proved the following theorem.
\begin{theorem}
\label{thm:sink} 
Let $X\in\Omega_n$, and let ${\mc D} =\{D_{\sigma_1}, D_{\sigma_2},\ldots, D_{\sigma_k}\}$ be a set of $k$ pairwise disjoint zero diagonals of $X$. Assume that every diagonal of $X$ that is disjoint with the diagonals in $\mc D$ has a constant diagonal sum.  Then all entries of $X$ not on any of the diagonals  in
 $\mc D$ equal $\frac{1}{n-k}$.
\end{theorem}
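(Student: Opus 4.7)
The plan is to compare $X$ with the ``uniform-on-$G$'' matrix $J_G$, where $G$ denotes the set of positions $(i,j) \notin \bigcup_l D_{\sigma_l}$ and $(J_G)_{ij} = 1/(n-k)$ on $G$ (and $0$ elsewhere). Since the diagonals $D_{\sigma_l}$ are pairwise disjoint, each row and each column of $G$ contains exactly $n-k$ positions, so $J_G \in \Omega_n$. The theorem will follow if I can show $X = J_G$, i.e., $x_{ij} = 1/(n-k)$ for every $(i,j) \in G$.

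First, I would apply Birkhoff's theorem to write $X$ and $J_G$ as convex combinations $X = \sum_\pi \lambda_\pi P_\pi$ and $J_G = \sum_\rho \mu_\rho P_\rho$ of permutation matrices. Because both matrices vanish on $\bigcup_l D_{\sigma_l}$, every permutation appearing in either decomposition must be disjoint from $\mathcal{D}$; hence each such permutation has diagonal sum $c$ in $X$ by the theorem's hypothesis (where $c$ denotes the common value).

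The key step is a double computation of two inner products. For $X\cdot X$: directly $X\cdot X = \sum_{(i,j) \in G} x_{ij}^2$, and via the Birkhoff decomposition of $X$, $X \cdot X = \sum_\pi \lambda_\pi\, d_{P_\pi}(X) = c$. For $X \cdot J_G$: directly $X \cdot J_G = \tfrac{1}{n-k}\sum_{(i,j) \in G} x_{ij} = \tfrac{n}{n-k}$ (using that the row sums of $X$ are $1$), while via the Birkhoff decomposition of $J_G$, $X \cdot J_G = \sum_\rho \mu_\rho\, d_{P_\rho}(X) = c$. Comparing these yields $c = n/(n-k)$ and therefore $\sum_{(i,j) \in G} x_{ij}^2 = n/(n-k)$.

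The conclusion follows from Cauchy--Schwarz: since $|G| = n(n-k)$ and $\sum_{(i,j) \in G} x_{ij} = n$, the inequality $(\sum x_{ij})^2 \leq |G| \sum x_{ij}^2$ becomes $n^2 \leq n(n-k)\cdot \tfrac{n}{n-k} = n^2$. Equality in Cauchy--Schwarz forces all $x_{ij}$ with $(i,j) \in G$ to be equal, and the common value must then be $1/(n-k)$. I expect the main conceptual obstacle to be spotting the right auxiliary matrix $J_G$ to pair with $X$; once this is identified, Birkhoff's theorem and Cauchy--Schwarz do the rest with essentially no delicate combinatorial analysis of the bipartite graph $G$.
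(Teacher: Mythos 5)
Your proof is correct and is essentially the paper's own route: it instantiates Theorem \ref{thm:Achilles} (equivalently, Lemma \ref{lem:gen}) with the auxiliary matrix $J_G$ as the second point, computing $X\cdot X$ and $X\cdot J_G$ through Birkhoff decompositions together with the constant-diagonal-sum hypothesis. The only cosmetic difference is the endgame: you invoke equality in Cauchy--Schwarz, whereas the paper expands $(X-J_G)\cdot(X-J_G)=X\cdot X-2\,X\cdot J_G+J_G\cdot J_G=0$; these are equivalent.
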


For a matrix $X$, let $\xi (X)$ be the set of positions in which $X$ has 0's.
Generalizing Theorem \ref{thm:sink},  Achilles \cite{Achilles}   proved the following theorem.


%
\begin{theorem} 
\label{thm:Achilles}
Let $X,Y\in\Omega_n$, and let $Z_X \subseteq \xi(X)$ and $Z_Y \subseteq \xi(Y)$. Assume that all diagonals of $X$ disjoint from $Z_X$ have diagonal sum equal to $\alpha$ and all diagonals of $Y$ disjoint from $Z_Y$ have diagonal sum equal to $\beta$. The following hold:
\begin{itemize}
\item[\rm (i)] If $Z_X \subseteq Z_Y$, then $\alpha\le \beta$.
\item[\rm (ii)] If  $Z_X=Z_Y$, then $\alpha=\beta$ and $X=Y$.
\end{itemize}
\end{theorem}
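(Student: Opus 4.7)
The plan is to convert the diagonal-sum hypotheses into matrix inner-product identities and then exploit Birkhoff's theorem on $\Omega_n$. Since $d_P(A) = A \cdot P$ for any permutation matrix $P$, the assumptions translate to: $X \cdot P = \alpha$ whenever the support of $P$ avoids $Z_X$, and $Y \cdot P = \beta$ whenever the support of $P$ avoids $Z_Y$.

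I would next write $Y = \sum_i \lambda_i P_i$ by Birkhoff's theorem, with $\lambda_i > 0$. Because $Y$ vanishes on $\xi(Y)$ and the weights are strictly positive, each $P_i$ has support disjoint from $\xi(Y) \supseteq Z_Y$. Under the hypothesis $Z_X \subseteq Z_Y$ of (i), each $P_i$ is therefore disjoint from $Z_X$ as well, so $X \cdot P_i = \alpha$ and $Y \cdot P_i = \beta$. Averaging over the $\lambda_i$ yields the two identities $X \cdot Y = \alpha$ and $Y \cdot Y = \beta$. The same reasoning applied to a Birkhoff decomposition $X = \sum_j \mu_j Q_j$ gives $X \cdot X = \alpha$, since each $Q_j$ is automatically disjoint from $\xi(X) \supseteq Z_X$.

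Part (i) then drops out of the single computation
\[
  \|X - Y\|^2 \;=\; X \cdot X - 2\, X \cdot Y + Y \cdot Y \;=\; \alpha - 2\alpha + \beta \;=\; \beta - \alpha,
\]
which is nonnegative. For (ii), the condition $Z_X = Z_Y$ is symmetric in $X$ and $Y$, so part (i) applied with the roles reversed gives $\beta \le \alpha$ as well; hence $\alpha = \beta$, and the display above then forces $\|X-Y\|^2 = 0$, so $X = Y$.

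The only real idea is the first step — recognizing the diagonal-sum conditions as linear identities on permutation matrices that can be integrated against a Birkhoff representation, and observing that the Birkhoff components of $Y$ necessarily respect $\xi(Y)$. Once this is in place the argument is pure bookkeeping with the scalar product, so I do not anticipate a substantive obstacle.
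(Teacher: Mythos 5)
Your proof is correct and is essentially the argument the paper uses: it records this very computation in abstract form as Lemma \ref{lem:gen} (with $u=X$, $v=Y$, and the extreme-point sets being the permutation matrices avoiding $Z_X$ and $Z_Y$, so that $Z_X\subseteq Z_Y$ gives the required containment of index sets), deriving $X\cdot X=\alpha$, $Y\cdot Y=\beta$, $X\cdot Y=\alpha$ and then $0\le\|X-Y\|^2=\beta-\alpha$. Your only addition is to make explicit the (correct) observation that the Birkhoff components of a doubly stochastic matrix necessarily avoid its zero set, which the paper leaves implicit.
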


Theorem \ref{thm:sink} follows from this result by letting $Y$ be the doubly stochastic matrix with the zeros as prescribed by the set  ${\mc D} =\{D_{\sigma_1}, D_{\sigma_2},\ldots, D_{\sigma_k}\}$ of zero diagonals and  with all other elements  equal to  $\frac{1}{n-k}$; see  \cite{Achilles} for details.

\begin{corollary}\label{cor:one} An $n\times n$ doubly stochastic matrix $X$ with a specified set $Z$ of zeros all  of whose diagonal sums  avoiding $Z$  are equal  is uniquely determined.
\end{corollary}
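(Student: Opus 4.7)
The plan is to deduce this directly from part (ii) of Theorem \ref{thm:Achilles}. Suppose, for the sake of uniqueness, that $X$ and $Y$ are both $n\times n$ doubly stochastic matrices whose zero set contains the specified set $Z$, and such that every diagonal of $X$ (respectively $Y$) disjoint from $Z$ has a common sum $\alpha$ (respectively $\beta$). The goal is to conclude $X=Y$.

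To invoke Theorem \ref{thm:Achilles}(ii), I set $Z_X = Z_Y = Z$. The hypotheses of the theorem are then immediate: $Z_X \subseteq \xi(X)$ and $Z_Y \subseteq \xi(Y)$ by assumption on $Z$, and the diagonal-sum constancy conditions for $X$ (with value $\alpha$) and $Y$ (with value $\beta$) are exactly what is given. Since $Z_X = Z_Y$, part (ii) of the theorem yields $\alpha = \beta$ and $X=Y$, establishing uniqueness.

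The only subtle point worth flagging is an interpretive one: the phrase \emph{specified set $Z$ of zeros} must be read as a prescribed subset of the zero positions, not necessarily as the exact zero support. Otherwise there is nothing to check. Should one wish to read $Z$ as the exact zero support, the conclusion is of course still immediate from the same application of Theorem \ref{thm:Achilles}(ii). There is no genuine obstacle here; the corollary is essentially a restatement of the uniqueness content of that theorem, isolated for later reference.
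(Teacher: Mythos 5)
Your proof is correct and matches the paper's intended derivation: the corollary is stated immediately after Theorem \ref{thm:Achilles} precisely as the specialization of part (ii) to $Z_X=Z_Y=Z$, which is exactly the application you make. Your interpretive remark about $Z$ being a prescribed subset of the zero positions rather than the exact support is a reasonable clarification and does not change the argument.
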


Since their publication more than 40 years ago, the three paper \cite{Achilles,Bala,Sink}  have  not received   attention in the literature. 
In fact, Theorem \ref{thm:Achilles}  is true under more general circumstances as shown next.

\begin{lemma}\label{lem:gen}
 Let $\Gamma$ be a polytope $($in an inner product space$)$ with set of extreme points $W=\{w_1,w_2,\ldots,w_p\}$. Let $u,v\in \Gamma$ and let $X,Y\subseteq W$ such that
\[
   u=\sum_{x\in X}c_x x,\quad \;c_x\ge 0\;  (x\in X), \; \sum_{x\in X} c_x=1
\] 
and
\[
   v=\sum_{y\in Y}c_yy, \quad \;\;\;c_y\ge 0\ (y\in Y), \;\sum_{y\in Y} c_y=1.
\]
Assume that $u\cdot x=a$ for all $x\in X$ and $v\cdot y=b$ for all $y\in Y$. If $Y\subseteq X$, then $a\le b$. If $Y=X$, then  $a=b$ and $u=v$.
\end{lemma}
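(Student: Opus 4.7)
The plan is to derive the inequality $a \le b$ from the nonnegativity of $\|u-v\|^2$, after computing the three inner products $u\cdot u$, $v\cdot v$, and $u\cdot v$ by substituting the given convex representations into one slot of the inner product.

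The core observation driving everything is this: if a vector $w$ satisfies $w\cdot z = \gamma$ for every $z$ in a set $Z$, and some vector is written as a convex combination of points in $Z$, then the inner product of $w$ with that combination collapses to $\gamma$, because the coefficients sum to $1$. Applied with $w=u$ and $Z=X$, substituting the given expression for $u$ into itself yields $u\cdot u = a$; applied with $w=v$ and $Z=Y$, the analogous substitution yields $v\cdot v = b$.

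The decisive use of the inclusion $Y\subseteq X$ occurs in computing $u\cdot v$: substituting the representation of $v$ writes $u\cdot v$ as a convex combination of the numbers $u\cdot y$ for $y\in Y$, and since each such $y$ lies in $X$, every one of these inner products equals $a$, so $u\cdot v = a$. Then
\[
\|u-v\|^2 \;=\; u\cdot u - 2\, u\cdot v + v\cdot v \;=\; a - 2a + b \;=\; b-a \;\ge\; 0,
\]
which is exactly $a\le b$.

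For the equality case $Y=X$, the symmetric computation—substitute the representation of $u$ into $u\cdot v$ and use $v\cdot x = b$ for all $x\in X=Y$—also gives $u\cdot v = b$. Hence $a=b$, and then $\|u-v\|^2 = 0$, forcing $u=v$. There is no real obstacle in this plan; the one thing worth noticing is that the cross term $u\cdot v$ can be evaluated solely from the hypothesis on the smaller index set $Y$, which is precisely what the inclusion $Y\subseteq X$ permits, and this is what breaks the symmetry between $a$ and $b$ to yield an inequality rather than only an identity.
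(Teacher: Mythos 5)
Your proposal is correct and follows essentially the same argument as the paper: compute $u\cdot u=a$, $v\cdot v=b$, and $u\cdot v=a$ via the convex representations and the inclusion $Y\subseteq X$, then conclude from $0\le\|u-v\|^2=b-a$. The only cosmetic difference is in the equality case, where you evaluate $u\cdot v=b$ directly rather than invoking the inequality with the roles of $u$ and $v$ swapped; both routes are equally valid.
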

\begin{proof}
The proof follows the proof in \cite{Achilles}:
\[
u\cdot u=u\cdot \left(\sum_{x\in X}c_x x\right)= \sum_{x\in X} c_x(u\cdot x)=a\sum_{x\in X}c_x=a(1)=a.
\]
Similarly, $v\cdot v=b$. Now suppose that $Y\subseteq X$. Then a similar computation shows that 
$ u\cdot v=a$ and thus
\[0\le (u-v)\cdot (u-v)=u\cdot u -2u\cdot v+v\cdot v=a-2a+b=b-a\]
and hence $a\le b$. If $Y=X$, then we also have that $b\le a$, and hence $a=b$ and $u=v$.
\end{proof}
 
  Let $A$ be an $n\times n$ $(0,1)$-matrix which,  without loss of generality, is assumed to be fully indecomposable. The matrix $A$ defines a face ${\mathcal F}(A)$ of $\Omega_n$ consisting of all  doubly stochastic matrices $X$ with  $\xi(A)\subseteq \xi(X)$ (see \cite{Bru}).  In general, ${\mathcal F}(A)$ contains matrices $X$ where $\xi(A)$ is a proper subset of $ \xi (X)$ such that all the diagonals of $X$  not containing any positions in $\xi (X) $  have equal diagonal sums. The following example \cite{Achilles} is instructive.

\begin{example} {\rm \label{ex:one} 
Let 
\[A=\left[\begin{array}{cccc}
1&1&1&1 \\
1&1&0&0 \\
1&0&1&0 \\
1&0&0&1\end{array}\right].\]
Consider a doubly stochastic matrix $X\in {\mathcal F}(A)$. Then $X$ is of the form
\[X=\left[\begin{array}{cccc}
a+b+c-2&1-c&1-b&1-a \\
1-c&c&0&0 \\
1-b&0&b&0 \\
1-a&0&0&a\end{array}\right]\quad
(0\le a,b,c\le 1, a+b+c-2\ge 0).\]
In order for $X$ to have the same set of 0's as $A$, we must have $0<a,b,c<1$ and $a+b+c-2>0$. There are four diagonals of $X$ that avoid the displayed 0's. A simple computation shows that if the corresponding four diagonal sums are equal,  then $a=b=c$ and  $a+b+c-2=0$ and hence
\begin{equation}\label{eq:X}
X=\frac{1}{3}\left[\begin{array}{cccc}
0&1&1&1\\
1&2&0&0\\
1&0&2&0\\
1&0&0&2\end{array}\right],\end{equation}
We conclude that there does not exist a doubly stochastic matrix with 0's exactly where $A$ has 0's and where all of the diagonals avoiding the positions of the 0's of $A$  have  the same sum.  Thus not every zero pattern of a fully indecomposable $(0,1)$-matrix is realizable as the zero pattern of a doubly stochastic matrix whose diagonal sums avoiding the 0's are constant. Let the  $(0,1)$-matrix $A'$ be obtained from $A$ by replacing the 1 in position $(1,1)$ with a 0. Then $A'\le A$ and $A'$  is realizable as the zero pattern of a doubly stochastic matrix (namely $X$) whose diagonal sums avoiding the 0's have constant value 2.}
\hfill{$\Box$}
\end{example}

For later reference, we note that the matrix $X$ in (\ref{eq:X})  and its zero pattern can be permuted to obtain
\[
\left[\begin{array}{c|c||c|c}
0&1&1&1\\ \hline
1&0&1&0\\ \hline
1&0&0&1\\ \hline\hline
1&1&0&0\end{array}\right]\rightarrow\frac{1}{3} \left[\begin{array}{c|c||c|c}
0&1&1&1\\ \hline
1&0&2&0\\ \hline
1&0&0&2\\ \hline\hline
1&2&0&0\end{array}\right].\]

 A motivation for considering our investigations is the following:
  Given an $n\times n$ matrix $X=[x_{ij}]$, the optimal assignment problem
(OAP) 
asks  for a permutation $(i_1,i_2,\ldots,i_n)$ of $\{1,2,\ldots,n\}$ such that the corresponding diagonal sum in $X$ is maximum. Thus $x_{ij}$ is regarded as representing the value a person (corresponding to row $i$) brings to a job (corresponding to  column $j$).  An assignment of people $1,2,\ldots,n$ to jobs $1,2,\ldots,n$ is denoted by a permutation of $\{1,2,\ldots,n\}$, equivalently, an $n\times n$ permutation matrix. Let $A=[a_{ij}]$ be an $n\times n$ fully indecomposable $(0,1)$-matrix corresponding  to people and jobs as above where an entry $a_{ij}= 0$ is interpreted as person $i$ is not qualified for job $j$, and   an entry $a_{ij}= 1$ is interpreted as person $i$ is qualified for job $j$. Thus the only allowable assignments are those avoiding the 0's in $A$. Assume that $\xi(X)=\xi(A)$. The largest allowable diagonal sum of $X$ solves the OAP, under the restrictions imposed by $A$. Since $A$ is fully indecomposable, so is $X$ and as is well known, there exist diagonal matrices $D_1$ and $D_2$ with entries on the diagonal positive,  such that  $D_1XD_2$ is doubly stochastic. We now assume that $X$ is doubly stochastic, that is, we replace $X$ with $D_1XD_2$.  Thus the values $x_{ij}$ have been normalized so that the total value each person brings to the jobs and the total value of each job equals 1.   If all diagonal sums of $X$ avoiding the 0's of $A$ are equal, then any permissible assignment solves the OAP.

Let $A$ be an $n\times n$ fully indecomposable $(0,1)$-matrix  such that there exists an $n\times n$ doubly stochastic matrix $X$ with $\xi(X)=\xi(A)$, where all diagonals of $X$ disjoint from $\xi(X)$ have equal sum.  Call the matrix $X$ a {\it restricted constant diagonal sum} (abbreviated to {\it RCDS}) doubly stochastic matrix determined by $A$, and call $A$ the {\it pattern of an RCDS doubly stochastic matrix}. Note that if $A$ is the pattern of a RCDS doubly stochastic matrix, so is $PAQ$ for permutation matrices $P$ and $Q$. An analogous assertion holds for $X$. Our goal is to investigate and give methods of construction of RCDS doubly stochastic matrices and their patterns, and  some generalizations as discussed above. Note that if $A=J_n$ so that $\xi(A)=\emptyset$, then $\frac{1}{n}J_n$ is an RCDS doubly stochastic matrix determined by $A$.

\begin{example}
\label{ex:k-ones}
{\rm 
  Let $A$ be a $(0,1)$-matrix with $k$ 1's in each row and column. Define the $n \times n$ matrix $X$ so that $\xi(X)=\xi(A)$ and every nonzero entry in $X$ is $1/k$, i.e., $X=(1/k)A$. Then $X \in \Omega_n$ and every diagonal disjoint from $\xi(X)$ consists of only entries being $1/k$, so that the diagonal sum is $n/k$. Therefore $X$ is the  RCDS doubly stochastic matrix determined by $A$. It is well known that when $A$ has this form, $A$ contains $k$ pairwise disjoint diagonals, so this example is of the type considered in  Theorem \ref{thm:sink}. Note that all nonzero entries in $X$ are equal. Clearly, every matrix with this property must have the form of this example.
  Below is a specific example with $n=4$ and $k=2$, where we indicate a diagonal in boldface:
\[
A=
\left[
\begin{array}{cccc}
{\bf 1}&1&0&0 \\
1&0&{\bf 1}&0 \\
0&{\bf 1}&0&1 \\
0&0&1&{\bf 1}
\end{array}
\right], \;\;\;
X=(1/2)A=
\left[
\begin{array}{cccc}
1/2&1/2&0&0 \\
1/2&0&1/2&0 \\
0&1/2&0&1/2 \\
0&0&1/2&1/2
\end{array}
\right].
\] \endproof
}
\end{example}

We also note that one can determine in polynomial time if a given matrix $X$ is an RCDS doubly stochastic matrix. First, one checks if $X$ is doubly stochastic (trivial), and then one solves two optimal assignment problems, namely 
\[
   \max_{P \le X} \,P \cdot X \;\;\; \mbox{\rm and} \; \;\; \min_{P \le X} \,P \cdot X
\]
where $P$ ranges through permutation matrices $P$ satisfying $P \le X$. We then check if these two optimal values coincide.

The remaining part of the paper is organized as follows. In the next section we give a characterization of RCDS doubly stochastic matrices and a method for their construction. A discussion of a strengthening of the RCDS property is given next. In the two sections that follow we develop certain classes of RCDS doubly stochastic matrices. In the final section we briefly consider the difference of diagonal sums of a doubly stochastic matrix.

Notation: $A$ is a {\em nonnegative matrix} (resp. {\em positive matrix}), and we write $A \ge O$ (resp. $A>0$),  if each entry in $A$ is nonnegative (resp. positive).

\section{Characterization of RCDS matrices}
\label{sec:char}

In this section, we use the duality theorem of linear programming to give a  characterization of   RCDS doubly stochastic matrices which affords a means to construct them.

\begin{theorem}
\label{thm:RCDS-char} 
Let $A=[a_{ij}]$ be a fully indecomposable $(0,1)$-matrix of size $n \times n$ and let $R=(r_1, r_2, \ldots, r_n)$ and $S=(s_1, s_2, \ldots, s_n)$ be the row and column sum vectors of $A$. 

$(i)$ Let $u=(u_1, u_2, \ldots, u_n)$ and $v=(v_1, v_2, \ldots, v_n)$ be real vectors.  Define $Y =Y(u,v)=[y_{ij}]\in M_n$ by $y_{ij}=u_i+v_j$ whenever $a_{ij}=1$, and $y_{ij}=0$ otherwise. Assume that $y_{ij}>0$ whenever $a_{ij}=1$ and that all row and column sums of $Y$ are equal to some positive number $\alpha$, i.e., 
\begin{equation}
\label{eq:eq-line-sums}
\begin{array}{rl}
    u_i r_i+\sum_{j:a_{ij}=1} v_j = \alpha   &(i \le n), \\
    v_j s_j+\sum_{i:a_{ij}=1} u_i = \alpha   &(j \le n).
\end{array}
\end{equation}
Then $X=(1/\alpha)Y(u,v)$ is an RCDS doubly stochastic matrix of $A$.

$(ii)$ Conversely, assume $X$ is an RCDS doubly stochastic matrix of $A$. Then $X=(1/\alpha)Y(u,v)$, as in $(i)$, for some  vectors $u$ and $v$, and $\alpha$ as the common line sum of $Y(u,v)$.
\end{theorem}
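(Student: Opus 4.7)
The plan is to verify part (i) by direct computation and to prove part (ii) via linear-programming duality applied to the optimal assignment problem on $A$.

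For part (i), everything is essentially a one-line check. For any permutation $\sigma$ with $a_{i,\sigma(i)}=1$ for all $i$, the corresponding diagonal sum of $X$ equals
\[
\sum_{i=1}^n X_{i,\sigma(i)} \;=\; \frac{1}{\alpha}\sum_{i=1}^n (u_i+v_{\sigma(i)}) \;=\; \frac{1}{\alpha}\Bigl(\sum_{i} u_i + \sum_{j} v_j\Bigr),
\]
which is independent of $\sigma$. The doubly stochastic property follows from the line-sum normalization by $\alpha$, and the positivity assumption on $y_{ij}$ on the support of $A$ gives $\xi(X)=\xi(A)$. So $X$ is RCDS with common diagonal sum $(\sum_i u_i+\sum_j v_j)/\alpha$.

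For part (ii), let $X$ be an RCDS doubly stochastic matrix for $A$ with common diagonal sum $\alpha$. I would introduce the assignment LP restricted to the support of $A$:
\[
\max \sum_{(i,j):\,a_{ij}=1} P_{ij}X_{ij}\quad \text{s.t.}\quad \sum_{j:\,a_{ij}=1} P_{ij}=1,\ \sum_{i:\,a_{ij}=1} P_{ij}=1,\ P_{ij}\ge 0.
\]
By Birkhoff's theorem, the extreme points of the feasible region are precisely the permutation matrices $P\le A$, and by the RCDS hypothesis every such $P$ attains the objective value $\alpha$. Hence the primal optimum is $\alpha$. The LP dual has free variables $u_i, v_j$ and reads
\[
\min \sum_i u_i + \sum_j v_j \quad \text{s.t.}\quad u_i+v_j \ge X_{ij}\ \text{for every $(i,j)$ with $a_{ij}=1$},
\]
so by strong duality there exist $u,v$ with $\sum_i u_i+\sum_j v_j=\alpha$ and $u_i+v_j\ge X_{ij}$ on the support of $A$.

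The decisive step is to combine complementary slackness with the full indecomposability of $A$. Since $A$ is fully indecomposable, every position $(i,j)$ with $a_{ij}=1$ lies on some permutation matrix $P\le A$, and every such $P$ is primal-optimal. Complementary slackness then forces $u_i+v_j = X_{ij}$ throughout the support of $A$. Replacing $(u,v)$ by $(\alpha u,\alpha v)$ gives $Y(u,v)_{ij}=\alpha X_{ij}$ on the support, hence $X=(1/\alpha)Y(u,v)$; the common line sum of $Y(u,v)$ is $\alpha$ because $X$ is doubly stochastic; and $y_{ij}>0$ on the support follows from $X_{ij}>0$ there. The main obstacle I anticipate is technical rather than conceptual: one must restrict the LP variables to the support of $A$ so that the dual has no superfluous sign constraints on $u_i+v_j$ at zero positions, and then invoke the covering property of fully indecomposable matrices at precisely the right moment to upgrade the ``all-optimal'' RCDS hypothesis to the pointwise equation $X_{ij}=u_i+v_j$.
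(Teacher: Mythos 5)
Your proposal is correct and follows essentially the same route as the paper: part (i) by the same telescoping computation of the diagonal sum, and part (ii) by LP duality for the assignment problem restricted to the support of $A$, with full indecomposability (every $1$ of $A$ lies on a permutation matrix $P\le A$) upgrading the tightness of the dual constraints to all of the support. The only cosmetic differences are that you use the maximization form and invoke complementary slackness and Birkhoff's theorem by name, where the paper uses the minimization form, derives the slackness condition by writing out the weak-duality chain, and cites total unimodularity for integrality.
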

\begin{proof}
Since all row and column sums of $Y=Y(u,v)$ are equal to $\alpha$, and $y_{ij}\ge 0$ ($i,j\le n$), $X:=(1/\alpha)Y$ is a doubly stochastic matrix. Clearly $\xi(X)=\xi(Y)=\xi(A)$. Consider a nonzero diagonal in $Y$ corresponding to the permutation $\sigma=(k_1, k_2, \ldots, k_n)$. The associated diagonal sum in $Y$ is 
\[
   d^Y_{\sigma}=\sum_{i=1}^n y_{i,k_i}=\sum_{i=1}^n (u_i+v_{k_i})=
   \sum_{i=1}^n u_i+ \sum_{i=1}^n v_{k_i}=\sum_{i=1}^n u_i+ \sum_{i=1}^n v_i
\]
which is independent of $\sigma$. Thus, all diagonal sums in $Y$, and therefore in $X$, are equal, and $X$ is an RCDS doubly stochastic matrix of $A$. This proves (i). 

To prove (ii), for the given RCDS matrix $X=[x_{ij}]$ consider the linear optimization problem
\begin{equation}
\label{eq: assignment}
\begin{array}{lrl}  \vspace{0.05cm}
 \mbox{\rm minimize}    &\sum_{i,j} x_{ij} y_{ij} \\ \vspace{0.05cm}
 \mbox{\rm subject to}   &\sum_j y_{ij}=1 &(i \le n) \\  \vspace{0.05cm}
                                      &\sum_i y_{ij}=1 &(j \le n) \\  \vspace{0.05cm}
                                      &y_{ij} \ge 0 &(i,j \le n).
\end{array}
\end{equation}
Here we use variables $y_{ij}$ only for those $(i,j)$ such that $x_{ij}\ne 0$; the other $y_{ij}$  can be  assumed to be 0.
It is well-known that the coefficient matrix in (\ref{eq: assignment}) is totally unimodular (see \cite{Schrijver1986}) so there is an optimal solution which is integral; therefore $Y=[y_{ij}]$ is a permutation matrix. Therefore the optimal value $\gamma$ in (\ref{eq: assignment}) is the minimum diagonal sum in the matrix $X$; in fact, all diagonal sums in $X$ equal $\gamma$, by assumption, and  this optimal assignment problem is solved.
By the duality theorem of linear optimization, $\gamma$ is also equal to the maximum value in the dual problem 
\begin{equation}
\label{eq: dual-assignment}
\begin{array}{lrl}  \vspace{0.05cm}
 \mbox{\rm maximize}    &\sum_i u_i + \sum_j v_j \\  \vspace{0.05cm}
 \mbox{\rm subject to}   &u_i+v_j \le x_{ij} &(i,j \le n). \\  \vspace{0.05cm}
 \end{array}
\end{equation}
Here the constraints are present only for  those $(i,j)$ such that $x_{ij}\ne 0$.
 So, there exists $u^*_i$, $v^*_i$ ($i \le n$) such that 
\[
   u^*_i+v^*_j \le x_{ij} \;\;(i,j \le n)\;\; \mbox{\rm  and} \;\;  \sum_i u^*_i + \sum_j v^*_j=\gamma.
\]
To examine the duality relation closer note that if $Y=[y_{ij}] \in \Omega_n$ (so it satisfies the constraints in (\ref{eq: assignment})), then 
\begin{equation}
\label{eq:weakduality}
  \sum_{i,j} x_{ij} y_{ij} \ge \sum_{i,j} (u^*_i+v^*_j) y_{ij}
  =\sum_i u^*_i\sum_j y_{ij} + \sum_j v^*_j\sum_i y_{ij} = \sum_i u^*_i+\sum_j v^*_j=\gamma.
\end{equation}
If $Y$ is a permutation matrix, then the left hand side here is also equal to $\gamma$, and therefore the inequality holds with equality. This means that if $y_{ij}=1$, then $x_{ij}=u^*_i+v^*_j$ ($i,j\le n$).  Since $A$ is fully indecomposable, for every $(i,j)$ outside $\xi(A)$, there exists a permutation matrix with a 1 in that position. Therefore $x_{ij}=u^*_i+v^*_j$ ($i,j\le n$) as desired.
\end{proof}

We remark that the assumption that $A$ is fully indecomposable is only used in part (ii) of the theorem. Moreover, Theorem \ref{thm:RCDS-char} is closely related to Theorem 2.6.3 and Theorem 2.6.4 in \cite{Bapat} where one studies products of diagonals of doubly stochastic matrices. Our result may be obtain by taking the logarithm of these products. The proofs are quite similar in using integrality of the polytope of doubly stochastic matrices and LP duality. However, our proof is shorter for the main part (ii) due to our analysis of equality in (\ref{eq:weakduality}).

The construction given in the theorem is illustrated in the next example.

\begin{example}{\rm \label{ex:root2}
Let $A$ be the $(0,1)$-matrix
\[\left[\begin{array}{c|c|c|c|c|c}
&1&1&1&&\\ \hline
1&&&&1&1\\ \hline
1&&1&&&\\ \hline
1&&&1&&\\ \hline
&1&&&1&\\ \hline
&1&&&&1\end{array}\right].
\]
Choose vectors $u$ and $v$ as indicated below and let $y_{ij}=u_i+v_j$ for each $(i,j) \not \in \xi(A)$ 
\[
\begin{array}{c||c|c|c|c|c|c|} 
u\setminus v&0&0&1&1&1&1 \\ \hline \hline
1&&1&2&2&&\\ \hline
1&1&&&&2&2\\ \hline
2&2&&3&&&\\ \hline
2&2&&&3&&\\ \hline
2&&2&&&3&\\ \hline
2&&2&&&&3 \\ \hline
\end{array}
\]
where every line sum is $5$. Therefore, by Theorem \ref{thm:RCDS-char}, the diagonal sums are equal, in fact equal to $14$.  The corresponding RCDS doubly stochastic matrix is 
\[X=\frac{1}{5}\left[\begin{array}{c|c|c|c|c|c}
&1&2&2&&\\ \hline
1&&&&2&2\\ \hline
2&&3&&&\\ \hline
2&&&3&&\\ \hline
&2&&&3&\\ \hline
&2&&&&3 
\end{array}\right].
\]
}\hfill{$\Box$}\end{example}

\begin{example} 
\label{ex:simplex1} 
{\rm 
Consider the following RCDS doubly stochastic matrix and its construction from vectors $u$ and $v$:
\[
\frac{1}{4}
\left[
\begin{array}{c|c|c|c||c|c|c}
1&&&1&2&&\\ \hline
&&&&2&2&\\ \hline
&&&&&2&2\\ \hline
1&&&1&&&2\\ \hline\hline
2&2&&&&&\\ \hline
&2&2&&&&\\ \hline
&&2&2&&&
\end{array}
\right], 
\hspace{1cm}
\begin{array}{c||c|c|c|c||c|c|c|} 
 u\setminus v&-1&-1&-1&-1&0&0&0 \\ \hline \hline
2&1&&&1&2&&\\ \hline
2&&&&&2&2&\\ \hline
2&&&&&&2&2\\ \hline
2&1&&&1&&&2\\ \hline\hline
3&2&2&&&&&\\ \hline
3&&2&2&&&&\\ \hline
3&&&2&2&&& \\ \hline
\end{array}
\] 
Note that here $v$ has some negative components. By  adding 1 to the components of $u$ and subtracting 1 from the components of $v$, we obtain nonnegative $u$ and $v$. \endproof
}
\end{example}

Theorem \ref{thm:RCDS-char} may be used to construct classes of RCDS doubly stochastic matrices in the following way:
\begin{enumerate}
 \item Let $n \ge 1$, and let $u=(u_1, u_2, \ldots, u_n)$ and $v=(v_1, v_2, \ldots, v_n)$ be real vectors.  Define the matrix $Y =Y(u,v)=[y_{ij}]\in M_n$ by $y_{ij}=u_i+v_j$ and assume that $u$ and $v$ are chosen so that $Y$ is nonnegative (see remark below). 
 \item Choose an $n \times n$ $(0,1)$-matrix $S$ such that the Hadamard product (that is, enrtywise product) $Y \circ S$ has constant positive line (row and column) sums, and let $\alpha$ denote this sum. 
 \item Then $V=(1/\alpha)Y \circ S$ is RCDS doubly stochastic matrix. \endproof
\end{enumerate} 

Clearly, the nontrivial part is step 2 which is to select an  appropriate $S$, that is, to select entries from $Y$ such that all line sums for the selected entries are constant. Then the fact that $V=(1/a)Y \circ S$ is an RCDS doubly stochastic matrix follows from Theorem \ref{thm:RCDS-char}. Moreover, {\em any} such RCDS doubly stochastic matrix may be constructed in this way. Those entries of $Y$ which are not selected could have been negative without altering the conclusion 3. above.

We give an example of this procedure. Let $k, t \ge 1$ be integers and consider  an $n \times n$  matrix
\begin{equation}
\label{eq:RCDS-class1}
V=\frac{1}{tp} \left[
\begin{array}{cccc} 
 tI_k & tI_k& \cdots & tI_k \\ \hline 
\multicolumn{4}{c}{A} 
\end{array}
\right]
\end{equation} 
where the block $tI_k$ occurs $p$ times, $n=kp$, and $A$ is  an $(n-k) \times n$ $(0,1)$-matrix.

\begin{theorem}
\label{thm:RCDS-class1} 
Let $k, t, p \ge 1$ be integers with $t\le k$, and let $n=kp$. Define constant vectors $R=tpJ_{n-k,1}$ and $S=t(p-1)J_{n,1}$, and let $A$ be a $(0,1)$-matrix with row sum vector $R$ and column sum vector $S$.  Then  $V$ in $(\ref{eq:RCDS-class1})$ is an RCDS doubly stochastic matrix.
\end{theorem}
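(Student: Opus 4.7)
The plan is to invoke Theorem \ref{thm:RCDS-char}(i) directly. The support pattern of $V$ is the block $(0,1)$-matrix
\[
\tilde A = \left[\begin{array}{cccc} I_k & I_k & \cdots & I_k \\ \hline \multicolumn{4}{c}{A} \end{array}\right],
\]
with $p$ copies of $I_k$ in the top band. The hypothesis $t\le k$ is what makes such an $A$ feasible, and the balance $(n-k)tp = n\cdot t(p-1)$ holds because both sides equal $tpk(p-1)$, so the theorem applies once I supply suitable vectors $u,v$.

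I would exhibit $u,v\in\mathbb{R}^n$ so that $Y(u,v)$, restricted to $\tilde A$, reproduces $tp\cdot V$. The choice is essentially forced, since every nonzero entry in a row of $V$ has the same value (determined by whether the row lies in the top or bottom band). Take
\[
u_i = \begin{cases} t, & 1\le i\le k, \\ 1, & k<i\le n, \end{cases} \qquad v_j = 0 \quad (1\le j\le n).
\]
Then on every position of $\tilde A$ one has $y_{ij}=u_i+v_j>0$: entries in the top band equal $t$, and entries in the bottom band equal $1$. Hence $Y(u,v)=tp\cdot V$.

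Next I would verify that $Y=Y(u,v)$ has common line sum $\alpha:=tp$, which is exactly condition (\ref{eq:eq-line-sums}). A top row of $Y$ has $p$ nonzero entries (one per $tI_k$ block), each equal to $t$, summing to $tp$. A bottom row of $Y$ has $tp$ nonzero entries (by the row-sum hypothesis on $A$), each equal to $1$, summing to $tp$. A column of $Y$ receives a single contribution of $t$ from the top band (one per diagonal of an $I_k$ block) together with $t(p-1)$ contributions of $1$ from the bottom band (by the column-sum hypothesis on $A$), giving $t+t(p-1)=tp$.

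Having checked these line sums, Theorem \ref{thm:RCDS-char}(i) immediately yields that $V=(1/\alpha)Y(u,v)$ is an RCDS doubly stochastic matrix, completing the proof. There is no real obstacle in this argument: the structure of $V$ (constant entry within each of the two horizontal bands) makes the separable form $u_i+v_j$ inevitable, and the line-sum identities are straightforward counts once the row- and column-sum data of $A$ are used.
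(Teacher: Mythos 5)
Your proof is correct and follows essentially the same route as the paper: the same choice $u=(t,\ldots,t,1,\ldots,1)$, $v=0$, verification of the common line sum $\alpha=tp$, and an appeal to Theorem \ref{thm:RCDS-char}(i). The only thing the paper does in addition is verify (via Gale--Ryser) that a matrix $A$ with the prescribed constant row and column sums actually exists, a point you assert but do not prove; since such an $A$ is a hypothesis of the theorem, this does not affect the validity of your argument.
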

\begin{proof}
Let $u=(t, t, \ldots, t, 1, 1, \ldots, 1)\in \mb{R}^n$ where the first $k$ components are $t$. Let $v$ be the zero vector of length $n$. Then the matrix $Y=Y(u,v)$ (defined above) has $n$ columns, each equal to $u$. So every entry in the first $k$ rows of $Y$ is $t$ and all other entries are 1. 
Thus $V=(1/\alpha)Y \circ S$ where $\alpha=tp$ and $S$ is the $(0,1)$-matrix with ones in the positions of the nonzeros of $V$. By the procedure above  $V$ is an RCDS doubly stochastic matrix. It only remains to show that a matrix $A \in \mc{A}(R,S)$ exists.  Let $R=(r_1, r_2, \ldots, r_n)$ and $S=(s_1, s_2, \ldots, s_n)$. Note that $t(p-1) \le n-k$ as $n-k=kp-k=(p-1)k$ and $t \le k$. Thus $r_i \le n$  and $s_j \le n-k$ for each $i$ and $j$. 
Moreover, $\sum_i r_i=(n-k)tp=ntp-ktp$ and  $\sum_j s_j=n(p-1)t$. 
But then the class $\mc{A}(R,S)$ is nonempty  as both $R$ and $S$ are constant vectors. This may be verified from the Gale-Ryser theorem (see e.g. \cite{RAB91}) as  $S$ is majorized by the conjugate $R^*$ of $R$. 
\end{proof}

\begin{example}
{\rm 
Let $k=3$, $t=2$, $p=2$ and $n=kp=6$. Then the following matrix 
\[
V=(1/4)
\left[
\begin{array}{ccc|ccc} 
2&0&0&2&0&0\\ 
0&2&0&0&2&0\\ 
0&0&2&0&0&2\\ \hline
1  &   1  &   0 &    1  &   1 &    0 \\
0   &  1  &   1  &   1  &   0  &  1 \\
1    & 0  &   1 &    0  &   1  &   1
\end{array}
\right]
\]
is an RCDS doubly stochastic matrix. \endproof
}
\end{example}

Theorem \ref{thm:RCDS-class1} shows that RCDS patterns may be complicated, and as least as complicated as the pattern of $(0,1)$-matrices with constant row sums and constant column sums.

\medskip
We  return to the characterization in Theorem \ref{thm:RCDS-char}. 
Let $A$ be a given $n \times n$ fully indecomposable $(0,1)$-matrix. Let $R(A) = (r_1, r_2, \ldots, r_n)$ and $S(A) = (s_1, s_2, \ldots, s_n)$ be the row sum and column sum vectors of $A$.  Let $D_R$ and $D_S$ be the diagonal matrices with diagonal $R(A)$ and $S(A)$, respectively. Define 
\[
   R_i(A)=\{j: a_{ij} =1\}, \; (i \le n) \;\; \mbox{\rm and}  \;\; C_j(A)=\{i: a_{ij} =1\} \; (j \le n).
\]
Thus $r_i = |R_i(A)|$ and $s_j = |S_j(A)|$ for each $i$ and $j$. In the equations (\ref{eq:eq-line-sums}) in Theorem \ref{thm:RCDS-char} we may assume $\alpha=1$. This gives 
\begin{equation}
\label{eq:RCDS-system1}
 \begin{array}{ll} \vspace{0.1cm}
      r_iu_i + \sum_{j \in R_i(A)} v_j = 1 &(i \le n) \\
      s_jv_j + \sum_{i \in C_j(A)} u_i = 1 &(j \le n) \\
 \end{array}
\end{equation}
which is linear system of equations with $2n$ variables $u_i$, $v_j$ ($i,j \le n$) and $2n$ constraints. 
Rewriting this system in matrix form gives 
\begin{equation}
\label{eq:RCDS-system2}
Hx=e, \;\;\;{\rm where} \;\,
H=
\left[
 \begin{array}{cc} 
      D_R &A   \\
      A^T & D_S
 \end{array}
 \right] \;\mbox{\rm and} \;\;
 x=
 \left[
 \begin{array}{ll} 
      u \\
      v
 \end{array}
 \right].
 \end{equation}
Here  $e$ is the all ones vector (of suitable dimension). 

We observe the surprising fact that the matrix $H$ is equal to the signless Laplacian matrix of the bipartite graph whose biadjacency matrix is $A$.
Therefore, a lot is known on $H$ in terms of spectral properties, e.g., $H$ is  positive semidefinite and singular. The vector $w=(e,-e)$ lies in the null space of $H$. Thus the system (\ref{eq:RCDS-system2}) has $H$ as the coefficient matrix, the right hand side is all ones, and we look for a solution with a certain non-negativity property. We may solve this system using the block structure: 
\[
  D_Ru+Av=e, \; A^Tu+D_Sv=e
\]
which gives $u=D_R^{-1}(e-Av)$, so $A^TD_R^{-1}(e-Av)+D_Sv=e$, i.e., 
\[
 (A^TD_R^{-1}A-D_S)v=A^TD_R^{-1}e-e. 
\]
We next discuss whether this system has a solution or, equivalently, if $Hx=e$ has a solution. 
When $A$ is a $(0,1)$-matrix let $BG(A)$ denote the bipartite graph whose reduced adjacency matrix is $A$. 
 
 The following is a main result on RCDS patterns, based on the discussion above. 
 
 \begin{theorem}\label{th:main}
   Let $A$ be an $n \times n$ fully indecomposable $(0,1)$-matrix. Then the following holds:
   
   $(i)$ There exists $u=(u_1,u_2,\ldots,u_n)$ and $v=(v_1,v_2,\ldots,v_n)$  such that  $(u,v)$ is a solution of the  system $(\ref{eq:RCDS-system2})$. The solution is unique up to adding a constant to each component in $u$ and subtracting the same constant from each component in $v$.
   
   $(ii)$ $A$ is the pattern of an RCDS doubly stochastic if and only if $u_i+v_j>0$ for all $(i,j)$ with $a_{ij}=1$, where $(u,v)$ is an arbitrary solution of $(\ref{eq:RCDS-system2})$. 

 \end{theorem}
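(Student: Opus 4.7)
The plan is to use the observation, already made in the excerpt, that the coefficient matrix $H$ of the linear system (\ref{eq:RCDS-system2}) is precisely the signless Laplacian of the bipartite graph $BG(A)$, and then to combine this with Theorem \ref{thm:RCDS-char}.

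For part $(i)$, I would first argue that $H$ is symmetric positive semidefinite and that its null space is exactly the span of $w=(e,-e)$. The inclusion $\mathrm{span}\{w\}\subseteq \ker H$ is a direct computation from the two block equations $D_Ru+Av=0$ and $A^Tu+D_Sv=0$, using that $A$ has row sums $R$ and column sums $S$. For the reverse inclusion, one notes that any $(u,v)\in\ker H$ must satisfy $u_i+v_j=0$ for every $(i,j)$ with $a_{ij}=1$ (this follows from the quadratic form $x^THx=\sum_{a_{ij}=1}(u_i+v_j)^2$, which is a standard identity for the signless Laplacian of a bipartite graph). Since $A$ is fully indecomposable, the bipartite graph $BG(A)$ is connected, so the relations $u_i=-v_j$ propagate to force $u$ constant and $v=-u$. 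Existence of a solution to $Hx=e$ then follows from the Fredholm alternative: the right-hand side $(e,e)\in \mathbb{R}^{2n}$ is orthogonal to $w=(e,-e)$, hence lies in the range of $H$. Uniqueness up to the stated ambiguity is immediate from $\dim\ker H=1$.

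For part $(ii)$, the backward direction is essentially a direct application of Theorem \ref{thm:RCDS-char}$(i)$: given a solution $(u,v)$ of (\ref{eq:RCDS-system2}) with $u_i+v_j>0$ whenever $a_{ij}=1$, the matrix $Y(u,v)$ is nonnegative with zero pattern exactly $\xi(A)$, its row and column sums all equal $1$ by (\ref{eq:RCDS-system1}) with $\alpha=1$, and so it is doubly stochastic; Theorem \ref{thm:RCDS-char}$(i)$ then certifies it as an RCDS matrix with pattern $A$. For the forward direction, suppose $A$ is the pattern of an RCDS doubly stochastic matrix $X$. By Theorem \ref{thm:RCDS-char}$(ii)$ there exist vectors $u',v'$ and a positive $\alpha$ with $X=(1/\alpha)Y(u',v')$ and with $Y(u',v')$ having all line sums equal to $\alpha$. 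Setting $u=u'/\alpha$, $v=v'/\alpha$ yields a solution of (\ref{eq:RCDS-system2}), and $u_i+v_j=x_{ij}>0$ whenever $a_{ij}=1$ since $\xi(X)=\xi(A)$. Finally, because any two solutions of (\ref{eq:RCDS-system2}) differ by adding some constant $t$ to every $u_i$ and subtracting $t$ from every $v_j$, the sums $u_i+v_j$ are invariant under this ambiguity, so the positivity criterion does not depend on which solution is chosen.

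The main obstacle I anticipate is the clean identification of $\ker H$ with $\mathrm{span}\{(e,-e)\}$ under the fully indecomposable hypothesis; everything else reduces to bookkeeping and an appeal to the already-established Theorem \ref{thm:RCDS-char}. Once the null space is pinned down, consistency of $Hx=e$, the uniqueness clause of $(i)$, and the well-definedness of the positivity test in $(ii)$ all follow simultaneously.
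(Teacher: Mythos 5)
Your proof is correct and follows essentially the same route as the paper: identify $H$ with the signless Laplacian of the connected bipartite graph $BG(A)$, pin down $\ker H=\Span\{(e,-e)\}$, deduce solvability of $Hx=e$ from the orthogonality $(e,e)\cdot(e,-e)=0$, and reduce $(ii)$ to Theorem \ref{thm:RCDS-char} together with the invariance of $u_i+v_j$ under the kernel ambiguity. The only difference is that you actually prove, via the quadratic form $x^THx=\sum_{a_{ij}=1}(u_i+v_j)^2$ and connectivity, the fact that $0$ is a simple eigenvalue of $H$, which the paper simply cites as known.
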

 \begin{proof}
  First, as $A$ is fully indecomposable, the bipartite graph  $BG(A)$ is connected. 
  Since $H$ is the signless Laplacian of $BG(A)$, and this graph is connected, it is a known fact that 0 is a simple eigenvalue of $H$. So $H$ has rank $2n-1$.  $Hx=e$ has a  solution provided that $e$ lies in the range (column space) of $H$, so we need to show this. Let $L$ denote the null space of $H$, so $L=\Span \{w\}$ where $w=(e,-e)$. Then $L$ is the orthogonal complement of the row space of $H$. The row space and the column space are equal, as $H$ is symmetric. But 
 \[
    e \cdot w=n-n=0.
 \] 
 Thus $e$ is in $L^{\perp}$, and it follows that $e$ lies in the range of $H$. Hence $Hx=e$ has a  solution, and a general solution is obtained by adding some multiple of the vector $w$. This shows (i). 
 
 Next, assume  $A$ is a pattern of an RCDS. Then, by Theorem \ref{thm:RCDS-char} and the discussion above there exists a solution $x=(u,v)$ of  $Hx=e$. By (i) in this theorem, the solution is unique up to adding a multiple of $w=(e,-e)$  ($w$ spans the null space of $H$), but this does not change the value of $x_{ij}=u_i+v_j$. Thus, we must have that $x_{ij}>0$ (from the initial assumption). The converse implication follows directly from Theorem \ref{thm:RCDS-char}, and the proof is complete.
 \end{proof}
 
 \begin{corollary}\label{cor:sym}
 Let $A$ be an $n\times n$ fully indecomposable, symmetric $(0,1)$-matrix which is the  pattern of an RCDS doubly stochastic matrix. Then there exists a symmetric, RCDS doubly stochastic matrix $X$ with pattern $A$, and a vector $w=(w_1,w_2,\ldots,w_n)$ such that 
 $X=A\circ W$ where $W=[w_{ij}]$ with $w_{ij}=w_i+w_j$  $(1\le i,j\le n)$. 
 \end{corollary}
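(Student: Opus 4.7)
The plan is to exploit the uniqueness assertion in Theorem \ref{th:main}(i) together with the symmetry of $A$. By Theorem \ref{th:main}, there exists a solution $(u,v)$ of the system $Hx=e$ from (\ref{eq:RCDS-system2}) such that $u_i+v_j>0$ whenever $a_{ij}=1$, and this gives an RCDS matrix $X$ via $x_{ij}=u_i+v_j$ on the support of $A$. In general there is no reason for this $X$ to be symmetric, but I would try to produce a symmetric one by averaging the components of $u$ and $v$ in a suitable way.

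The key observation is that when $A=A^T$ we also have $D_R=D_S$, so the block matrix $H$ of (\ref{eq:RCDS-system2}) is invariant under conjugation by the block-swap permutation $P$ that sends $(u,v)$ to $(v,u)$, namely $PHP=H$. Because $Pe=e$ as well, applying $P$ to the equation $Hx=e$ shows that if $(u,v)$ is a solution, then so is $(v,u)$. The uniqueness clause of Theorem \ref{th:main}(i) then forces these two solutions to differ by a multiple of $(e,-e)$, so $v=u+c\,e$ for some scalar $c$.

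Finally I would symmetrize: setting $w=u+(c/2)e$ yields $u=w-(c/2)e$ and $v=w+(c/2)e$, so $u_i+v_j=w_i+w_j$ for every $(i,j)$ with $a_{ij}=1$. Hence $X=A\circ W$ with $W=[w_i+w_j]$ is manifestly symmetric, its nonzero entries are positive (they equal the original positive entries $u_i+v_j$), and it is RCDS by construction. I do not expect any real obstacle; the whole argument is built on recognizing that symmetry of $A$ makes the affine solution set of $Hx=e$ invariant under swapping the $u$- and $v$-blocks, so the one-dimensional null space $\Span\{(e,-e)\}$ is exactly the slack needed to identify $u$ and $v$ up to a common vector $w$.
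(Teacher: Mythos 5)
Your proof is correct, but it reaches the conclusion by a genuinely different route than the paper. The paper symmetrizes at the level of the matrix: it takes an arbitrary RCDS doubly stochastic matrix $Z$ with pattern $A$, forms the symmetric RCDS matrix $X=\tfrac12(Z+Z^T)$ (the printed proof omits the factor $\tfrac12$), writes $X=A\circ Y(u,v)$ via Theorem \ref{th:main}, and then replaces $(u,v)$ by $\bigl(\tfrac{u+v}{2},\tfrac{u+v}{2}\bigr)$, which works because on the support of $A$ one has $\tfrac{u_i+v_i}{2}+\tfrac{u_j+v_j}{2}=\tfrac{(u_i+v_j)+(u_j+v_i)}{2}=\tfrac{x_{ij}+x_{ji}}{2}=x_{ij}$ by symmetry of $X$. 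You instead symmetrize at the level of the linear system $(\ref{eq:RCDS-system2})$: since $A=A^T$ gives $D_R=D_S$, the block swap $P$ satisfies $PHP=H$ and $Pe=e$, so $(v,u)$ solves the system whenever $(u,v)$ does, and the uniqueness clause of Theorem \ref{th:main}(i) (the null space of $H$ is spanned by $(e,-e)$) forces $v=u+ce$; shifting by $c/2$ produces $w$. Both arguments ultimately rest on Theorem \ref{th:main}; yours has the small bonus of showing that the RCDS matrix with a symmetric fully indecomposable pattern is \emph{automatically} symmetric (consistent with the uniqueness in Corollary \ref{cor:one}), with no need to average $Z$ and $Z^T$, whereas the paper's averaging step is more elementary and avoids the block-swap/null-space argument. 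Your closing checks — positivity of the entries from Theorem \ref{th:main}(ii), double stochasticity from $Hx=e$, and the RCDS property from Theorem \ref{thm:RCDS-char}(i) — are all in order.
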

 
 \begin{proof}
 Let $Z$ be an RCSD doubly stochastic matrix with pattern $A$. Since $A$ is symmetric, $Z^T$ is also 
 an RCSD doubly stochastic matrix with pattern $A$. Hence $X=Z+Z^T$ is  a symmetric,  RCSD doubly stochastic matrix with pattern $A$. By Theorem \ref{th:main} there exists $u=(u_1,u_2,\ldots,u_n)$ and $v=(v_1,v_2,\ldots,v_n)$ such that with  the matrix $Y(u,v)=[y_{ij}]$ where $y_{ij}=u_i+v_j$ (all $i,j$), $X=A\circ Y(u,v)$. But then
 we also have $X=A\circ Y (\frac{u+v}{2},\frac{u+v}{2})$.
 \end{proof}
 
 If $A$ is fully indecomposable, we have a polynomial-time algorithm for deciding if $A$ is an RCDS pattern: One first finds a (near-unique) solution $x=(u,v)$ of $Hx=e$. An efficient way of finding $x$ is by solving 
\[
  (A^TD_R^{-1}A-D_S)v=A^TD_R^{-1}e-e. 
\]
and defining  $u=D_R^{-1}(e-Av)$. Next, we simply check if $x_{ij}=u_i+v_j>0$ for $(i,j)$ with $a_{ij}=1$. If these strict inequalities hold, then $A$ is an RCDS pattern; otherwise, it is not. 

We remark that this algorithm may also be used find some ``random" RCDS patterns. One generates a random $(0,1)$-matrix $A$ and runs the algorithm above. Then, even if $A$ is not an RCDS pattern it may happen that the resulting matrix $X$ is doubly stochstic, but its support is {\em contained} in the support of $A$. This procedure have been used in the example below. 
Finally, we note that the rank of  $H$ satisfies 
$n \le \rank(H) \le 2n-1$ where the lower bound is obtained when $A$ is a permutation matrix.

\medskip
\begin{example}\label{ex:notso}
{\rm Let 
\[
A=
\left[
\begin{array}{cccccc} 
1&0&0&1&1\\ 
0&1&1&1&0\\ 
1&0&0&1&1\\ 
1&1&1&0&0\\ 
1&0&1&1&0 
\end{array}
\right].
\]
Using the procedure above we compute $v$ and $u$: 
 $v=(    0,     0.3,     0.1,     0,     0.25)$ and 
$u=(    0.25,    0.2,    0.25,    0.2,    0.3)$. From this we obtain  
\[
X=
\left[
\begin{array}{rrrrr}
    0.25    &     0   &      0  &  0.25 &   0.5 \\
         0   & 0.5  &  0.3   & 0.2   &      0 \\
    0.25    &     0    &     0  &  0.25  &  0.5 \\
    0.2  &  0.5  &  0.3  &       0   &      0 \\
    0.3   &      0 &   0.4  &  0.3    &     0
\end{array}
\right]
\]    
which is an RCDS doubly stochastic matrix corresponding to the pattern $A$. \endproof
}
\end{example}

\begin{example}
{\rm 
The following are some RCDS patterns  found by the procedure above ($n=5$):
\[
\left[
\begin{array}{r|r|r|r|r}
    1&1&&1&1 \\ \hline
    1&&1&1&1 \\ \hline
    &&1&1&1 \\ \hline
    &1&1&& \\ \hline
    1&1&&&1  
\end{array}
\right], 
\left[
\begin{array}{r|r|r|r|r}
    &&1&1& \\ \hline
    1&&1&1&1 \\ \hline
    &1&&1&1 \\ \hline
    &1&&&1 \\ \hline
    1&&&1&  
\end{array}
\right], 
\left[
\begin{array}{r|r|r|r|r}
    1&1&&&1 \\ \hline
    1&1&&& \\ \hline
    &&1&1&1 \\ \hline
    1&1&1&& \\ \hline
    &&&1&1  
\end{array}
\right], 
\left[
\begin{array}{r|r|r|r|r}
    &&1&1& \\ \hline
    1&1&&&1 \\ \hline
    &&1&&1 \\ \hline
    1&&1&1&1 \\ \hline
    1&1&1&&  
\end{array}
\right]. 
\]    
}
\end{example} \endproof

\section{Compatible classes of Permutation Matrices}
\label{sec:compatible}

In view of our earlier discussion, we now consider, primarily through examples,  the following  general problem (cf. Lemma \ref{lem:gen}). 
Let $A$ be an $n\times n$ fully indecomposable $(0,1)$-matrix and let ${\mathcal P}(A)$ be the set of permutation matrices $P\le A$. Thus ${\mathcal F}(A)=\{X:X\le A, \,X\in \Omega_n\}$  is a face of $\Omega_n$ whose set of extreme points is ${\mathcal P}(A)$.  The cardinality of ${\mathcal P}(A)$, the number of extreme points of ${\mathcal P}(A)$, equals the permanent, per$(A)$, of $A$.

The set ${\mathcal P}(A)$ is  a {\it compatible class} of permutation matrices provided that the scalar product
$Q\cdot \sum_{P\in {\mathcal P}(A)} P$ is a constant $\gamma (A)$ for all $Q\in {\mathcal P}(A)$. We also describe this by saying that $A$ has {\it compatible permutation support} (abbreviated to {\it CPS}). 
If $A$ has CPS, then the  matrix 
\begin{equation}\label{eq:permanent}
\widehat{A}=\frac{1}{\mbox{per}A}\sum \{ P:P\in {\mathcal P}(A)\}\end{equation}
is  a doubly stochastic matrix with constant diagonal sums avoiding the zero set $\xi(A)$ of $A$. Thus the CPS property determines a subclass of RCDS doubly stochastic matrices and provides another possible way to construct such matrices.

In our discussion that follows,  it  is usually more convenient to drop the normalizing factor $ \frac{1}{{\rm per}(A)}$  and to use instead
of (\ref{eq:permanent}) the matrix
\begin{equation}\label{eq:permanent1}
 \widehat{A}=\sum \{ P:P\in {\mathcal P}(A)\}.\end{equation}
In order that ${\mathcal P}(A)$ be a compatible class of permutations, it is necessary and sufficient  that the sum of the permanental minors of $A$ corresponding to the  1's of each permutation matrix $P\le A$ equals a  constant $\gamma(A)$ independent of the permutation matrix $P$.
This is because the permanental minor of a 1 in $A$ counts the number of permutation matrices $Q\le A$ which use that 1. Hence if $A$ has  CPS,  then for each permutation $\sigma=(j_1,j_2,\ldots,j_n)$ of $\{1,2,\ldots,n\}$ with corresponding permutation matrix $P_{\sigma}\le A$,
\begin{equation}
\gamma(A)=\sum_{i=1}^n  \mbox{per} A(i|j_i),  \end{equation}
where $\mbox{per} A(i|j_i)$ is the permanent of the matrix obtained from $A$ by deleting row $i$ and column $j_i$. 
Thus, the matrix  in (\ref{eq:permanent}) is an RCDS doubly stochastic matrix if and only if the sum of the permanental minors of the 1's corresponding to each  permutation matrix $P\le A$ is constant. We remark here that Bapat \cite{Bapat0} proved that if an $n\times n$  fully indecomposable (0,1)-matrix $A$ satisfies that  the permanental minors of all entries (both 0's and  1's) are constant,
then $A=J_n$ or (after row and column permutations) $A=I_n+P_n$, where $P_n$ is the permutation matrix corresponding to the permutation $(2,3,\ldots,n,1)$.

\begin{example} {\rm \label{ex:simple} The following two examples of RCDS doubly stochastic  matrices come from simplex faces of the polytope $\Omega_7$  using the construction given in (\ref{eq:permanent}):
\[\frac{1}{4}\left[\begin{array}{c|c|c|c||c|c|c}
1&&&1&2&&\\ \hline
&&&&2&2&\\ \hline
&&&&&2&2\\ \hline
1&&&1&&&2\\ \hline\hline
2&2&&&&&\\ \hline
&2&2&&&&\\ \hline
&&2&2&&&\end{array}\right] \mbox{ and }
\frac{1}{9}\left[\begin{array}{c|c|c|c||c|c|c}
&&&&3&3&3\\ \hline
&1&1&1&6&&\\ \hline
&1&1&1&&6&\\ \hline
&1&1&1&&&6\\ \hline\hline
3&6&&&&&\\ \hline
3&&6&&&&\\ \hline
3&&&6&&&\end{array}\right].\]
The first has the constant diagonal sum $\frac{13}{4}$; the second has the constant diagonal sum $\frac{7}{3}$.
Both matrices are symmetric. For instance, for the first matrix $u=(1,1,1,1,2,2,2)$ and $v=(0,0,0,0,2,2,2)$ work as in Theorem \ref{th:main}.
}\hfill{$\Box$}\end{example}

\begin{example}\label{ex:notso2}{\rm
We consider again the matrix $A$ in Example \ref{ex:notso}, repeated below.
The permanental minors of its 1's are given in the matrix $M$ where $*$ corresponds to a permanental minor of a 0 and so is not included in our calculations:
\[
A=
\left[
\begin{array}{cccccc} 
1&0&0&1&1\\ 
0&1&1&1&0\\ 
1&0&0&1&1\\ 
1&1&1&0&0\\ 
1&0&1&1&0 
\end{array}
\right], \quad
M=\left[\begin{array}{c|c|c|c|c}
3&*&*&3&6\\ \hline
*&6&4&2&*\\ \hline
3&*&*&3&6\\ \hline
2&6&4&*&*\\ \hline
4&8&4&4&*\end{array}\right].
\]
$M$ has diagonal sums of  $3+4+6+6+4=23$ and $6+6+3+2+4=21$. Hence $M$ does not have equal diagonal sums avoiding the 0's of $A$. While $A$ is the nonzero pattern of an RCDS doubly stochastic matrix, $A$ does not
have CPS. We conclude that CPS is a stronger property than RCDS.
}
\hfill{$\Box$}
\end{example}

There is a cubic bipartite graph $G$ of order 54  contained in the complete bipartite graph $K_{27,27}$, called the {\it Gray graph}, whose automorphism group of cardinality 1296 acts transitively on each set of the bipartition (but not on the complete vertex set)  and transitively on the edge set (an {\it edge-transitive graph} but not a {\it vertex-transitive} graph). The Gray graph 
is the smallest cubic edge-transitive graph which is not vertex-transitive \cite{Mal}.
\begin{center}
\includegraphics[width=150mm]{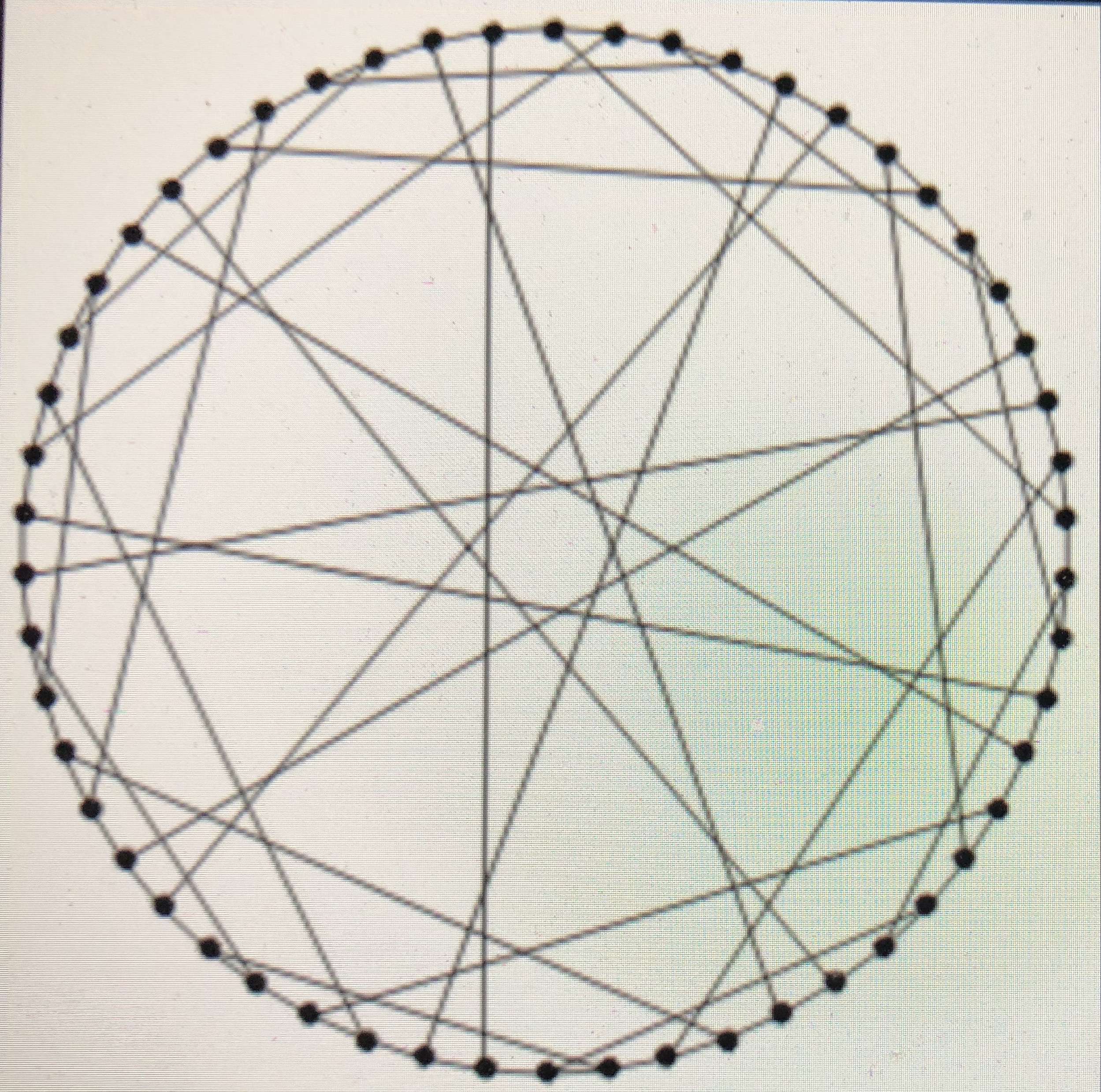}

\medskip
Figure 1: The Gray Graph $G$ (from Wikipedia, en.wikipedia.org).
\end{center}
Let $A$ be the $27\times 27$  biadjacency matrix of $G$ with suitable labeling of its vertices. Using Figure 1, we  have constructed $A$ as shown in Figure 2.

{\footnotesize
\[\left[\begin{array}{c|c|c|c|c|c|c|c|c|c|c|c|c|c|c|c|c|c|c|c|c|c|c|c|c|c|c}
1&1&&&&&1&&&&&&&&&&&&&&&&&&&&\\ \hline
&1&1&&&&&&&&&&&&&&&&&&&&&&1&&\\ \hline
&&1&1&&&&&&&&&&&&&&&&&&&1&&&&\\ \hline
&&&1&1&&&&&&&&&&&&&&1&&&&&&&&\\ \hline
&&&&1&1&&&&&1&&&&&&&&&&&&&&&&\\ \hline
&&&&&1&1&&&&&&1&&&&&&&&&&&&&&\\ \hline
&&&&&&1&1&&&&&&&&&&&&&1&&&&&&\\ \hline
&&&&&&&1&1&&&&&1&&&&&&&&&&&&&\\ \hline
&&&&1&&&&1&1&&&&&&&&&&&&&&&&&\\ \hline
&&&&&&&&&1&1&&&&&&&&&&&&&1&&&\\ \hline
&&&&&&&&&&1&1&&&&&&&&&&&&&&1&\\ \hline
&&&&&&&&&&&1&1&&&&&1&&&&&&&&&\\ \hline
&&1&&&&&&&&&&1&1&&&&&&&&&&&&&\\ \hline
&&&&&&&&&&&&&1&1&&&&&&&1&&&&&\\ \hline
&1&&&&&&&&&&&&&1&1&&&&&&&&&&&\\ \hline
&&&&&&&&&&&1&&&&1&1&&&&&&&&&&\\ \hline
&&&&&&&&&1&&&&&&&1&1&&&&&&&&&\\ \hline
&&&&&&&&&&&&&&&&&1&1&&&&&&&&1\\ \hline
&&&&&1&&&&&&&&&&&&&1&1&&&&&&&\\ \hline
1&&&&&&&&&&&&&&&&&&&1&1&&&&&&\\ \hline
&&&&&&&&&&&&&&&&1&&&&1&1&&&&&\\ \hline
&&&&&&&&1&&&&&&&&&&&&&1&1&&&&\\ \hline
&&&&&&&&&&&&&&&1&&&&&&&1&1&&&\\ \hline
&&&&&&&&&&&&&&&&&&&1&&&&1&1&&\\ \hline
&&&1&&&&&&&&&&&&&&&&&&&&&1&1&\\ \hline
&&&&&&&1&&&&&&&&&&&&&&&&&&1&1\\ \hline
1&&&&&&&&&&&&&&1&&&&&&&&&&&&1
\end{array}\right].\]
}

\medskip
\centerline{Figure 2: Adjacency matrix $A_G$ of the Gray graph $G$.}


 Since the automorphism group of $G$  is edge-transitive, every edge  must be in the same number of perfect matchings since perfect matchings are preserved under automorphisms. Since the perfect matchings containing an edge correspond to permutation matrices $P\le A_G$ containing the 1 corresponding to the edge, each 1 is in the same number of permutation matrices $P\le A_G$. This implies that the permanental minors of the 1's of $A_G$ are all equal. Thus, $A_G$ has  CPS and, in particular, $\widehat{A}_G$ (see (\ref{eq:permanent})) is a RDCS doubly stochastic matrix, and  indeed has a much stronger property. 

There are other classes of $n\times n$ (0,1)-matrices whose 1's have constant permanental minors. The matrices $J_n$ and $I_n+P_n$ as previously discussed (all of whose entries, not just the entries equal to 1) have constant permanental minors.
The $n\times n$ (0,1)-matrix $J_n-I_n$ of all 1's except for 0's on the main diagonal has all the permanental minors of its 1's  equal to the permanent of an $(n-1)\times (n-1)$ $(0,1)$-matrix with exactly $(n-2)$\ 0's where no two of these 0's belong to  the same row or column. The permanental minors of the 0's are also constant but a different constant if $n\ge 4$. Thus $\frac{1}{D_n}(J_n-I_n)$ is a RCDS doubly stochastic matrix where $D_n$ is the permanent of $J_n-I_n$ (the $n$th derangement number).

In general, simplex faces of the polytope $\Omega_n$ correspond to $n\times n$ fully indecomposable (0,1)-matrices which, after permutations of rows and columns, have the form
\begin{equation}\label{eq:simplex*}
A=
\left[\begin{array}{ccc|ccc}
&&&&&\\ 
&A_3&&&A_1&\\ 
&&&&&\\ \hline
&&&&&\\ 
&A_2&&&O&\\ 
&&&&&\end{array}\right]\end{equation}
where, for some $p$ with $0\le p\le n-2$, $A_3$ is an $(n-p)\times (p+1)$ nonzero matrix, and $A_1$ and $A_2^T$ are vertex-edge incidence matrices of trees $T_1$ and $T_2$ and so have exactly two 1's in each column (see Theorem 3.5 of  \cite{Bru}). 
Note that for  Example \ref{ex:simple}, the 1's in $A_3$ are in the rows and columns determined by the pendent vertices of $T_1$ and $T_2$, respectively. Each such row and column of $A_3$ must contain at least one 1 in order that $A$ be fully indecomposable.
The number of permutations $P\le A$ equals the number of 1's in $A_3$ with each such 1 on exactly  one such permutation matrix $P$. Let there be $k$ 1's in $A_3$ so that there are exactly $k$  permutation matrices  $P_1,P_2,\ldots,P_k$ with the $P_i\le A$. Then $X=P_1+P_2+\cdots+P_k$  is a nonnegative integral matrix with pattern $A$ and all its row and column sums equal $k$. Thus $\frac{1}{k}X$ is a doubly stochastic matrix.

Consider a simplex face given by (\ref{eq:simplex*}) in which $A_3$ has at least one 1 in those rows corresponding to the pendent vertices of $T_1$ and in those columns corresponding to the pendent vertices of $T_2$, and in no other positions.  Then $A$ is fully indecomposable.  Let $A_1^*$ be the matrix obtained from $A_1$ by including as a new first column, the column vector which has 1's exactly in those rows in which $A_3$ has a 1. Let $A_2^*$ be defined in a similar way using the columns of $A_3$. Then $A_1^*$ and $A_2^{*T}$ are square, vertex-edge incidence matrices of loopy trees with loops on exactly the pendent vertices of $T_1$ and $T_2$, respectively. Then   $A$ has CPS if and only if the permanental minors of the new 1's in $A_1^*$ are constant and similarly for $A_2^*$. 

\section{Tridiagonal RCDS patterns and trees}
\label{sec:triag-trees}

Let  $A$ be an $n\times n$ $(0,1)$-matrix. Recall that
${\mathcal  F}(A)=\{X \in \Omega_n: X \le A\}$ is the face of the  polytope $\Omega_n$  determined by $A$  and consists of all  those doubly stochastic $n \times n$ matrices that have zeros wherever $A$ has. If $A$ is fully indecomposable, then (see e.g.  \cite{RAB}) the dimension of this face is 
$\sigma(A)-2n+1$
where $\sigma(A)$ is the number of ones in $A$. For a matrix $A$ we let $\rho(A)$ denote its term rank. 

 %
  %
  
%

\begin{lemma}
 \label{lem:RCDS-nec-cond}
 Let $X$ be a an RCDS in $\Omega_n$. Let $X'$ be a $2 \times 2$ submatrix such that $X'>0$ $($entrywise$)$ and its complementary submatrix $X''$ satisfies $\rho(X'')=n-2$. Then the two diagonals in $X'$ have the same sum.
  \end{lemma}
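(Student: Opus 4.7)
The plan is to build two full diagonals of $X$ that agree off $X'$ and differ on $X'$ exactly by swapping main diagonal for anti-diagonal. Both diagonals will avoid the zero set of $X$, so the RCDS property will force their sums to be equal, and common contributions will cancel.

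More precisely, suppose $X'$ occupies rows $I=\{i_1,i_2\}$ and columns $J=\{j_1,j_2\}$. The hypothesis $X'>0$ ensures that the four positions $(i_1,j_1),(i_2,j_2),(i_1,j_2),(i_2,j_1)$ all lie outside $\xi(X)$. The hypothesis $\rho(X'')=n-2$ means the $(n-2)\times(n-2)$ submatrix $X''$ (on rows $\bar I$ and columns $\bar J$) contains a set $\tau$ of $n-2$ nonzero positions with no two in the same row or column; equivalently, there is a bijection $\pi\colon \bar I\to\bar J$ with $x_{i,\pi(i)}>0$ for every $i\in\bar I$.

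Now I would form two permutations $\sigma_1,\sigma_2$ of $\{1,\dots,n\}$ that both extend $\pi$: $\sigma_1$ sends $i_1\mapsto j_1$ and $i_2\mapsto j_2$ (the main diagonal of $X'$), while $\sigma_2$ sends $i_1\mapsto j_2$ and $i_2\mapsto j_1$ (the anti-diagonal of $X'$). By construction, neither diagonal meets $\xi(X)$. Since $X$ is RCDS, both have the common diagonal sum $\alpha$:
\[
\bigl(x_{i_1,j_1}+x_{i_2,j_2}\bigr)+\textstyle\sum_{i\in\bar I}x_{i,\pi(i)}=\alpha=\bigl(x_{i_1,j_2}+x_{i_2,j_1}\bigr)+\sum_{i\in\bar I}x_{i,\pi(i)}.
\]
Cancelling the common sum over $\tau$ yields $x_{i_1,j_1}+x_{i_2,j_2}=x_{i_1,j_2}+x_{i_2,j_1}$, which is exactly the claim that the two diagonals of $X'$ have equal sum.

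There is no real obstacle here; the whole argument is a two-line application of the RCDS definition once the hypotheses are unpacked. The only thing that needs care is verifying that the extended permutations really do avoid $\xi(X)$: on the $X'$ positions this is immediate from $X'>0$, and on the complementary positions it is exactly what $\rho(X'')=n-2$ provides through the chosen $\pi$.
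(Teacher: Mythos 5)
Your proof is correct and is exactly the paper's argument, merely written out in full: the paper's one-line proof likewise combines a nonzero diagonal of $X''$ (guaranteed by $\rho(X'')=n-2$) with each of the two diagonals of $X'$ and invokes the RCDS property to cancel the common part. No issues.
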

\begin{proof}
 This follows from the fact that a diagonal in $X''$ can be combined with any of the two diagonals in $X'$ to get a diagonal of $X$.
\end{proof}

Lemma \ref{lem:RCDS-nec-cond} gives a necessary condition for a matrix to be an RCDS doubly stochastic matrix. In a certain situation this condition is also sufficient, as we now discuss.

Tridiagonal doubly stochastic matrices were studied in \cite{Dahl04}, and a number of results on this special face $\Omega^t_n$ of $\Omega_n$ were established. Any matrix $X=[x_{ij}] \in \Omega^t_n$ is symmetric and determined by the entries on the superdiagonal, $x_i=x_{i,i+1}$ ($i=1, 2, \ldots, n-1$). We write $X=X(x)$ to indicates this dependency. For instance, for $n=5$ we have 
\[
X(x)=
\left[
\begin{array}{c|c|c|c|c}
1-x_1&x_1&&&\\ \hline
x_1&1-x_1-x_2&x_2& &\\ \hline
&x_2&1-x_2-x_3&x_3& \\ \hline
&&x_3&1-x_3-x_4&x_4 \\ \hline
&&&x_4&1-x_4\\ 
\end{array}
\right]
\]
where the remaining entries are zero.

\begin{theorem}
\label{thm: tridiag}
Let $A=[a_{ij}]$ be the tridiagonal $(0,1)$-matrix of order $n$ with $a_{ij}=1$ whenever $|i-j|\le 1$, and $a_{ij}=0$ otherwise. Then $A$ is the pattern of an RCDS doubly stochastic matrix $X$, and $X=X(x)$ where $x$ is uniquely determined by 
\begin{equation}
\label{eq:tri}
    x_{i-1}+4x_i+x_{i+1}=2  \;\;(i \le n)
\end{equation}
and $x_0=x_{n+1}=0$. The solution $x$ is such that $X(x)$ is doubly stochastic.
\end{theorem}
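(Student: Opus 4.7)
The plan is to derive (\ref{eq:tri}) as both a necessary and sufficient condition for $X(x)$ to be RCDS, and then to establish existence, uniqueness, and positivity of the solution.

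First, I identify the permutations $\sigma$ with $P_\sigma \le A$: since $A$ is tridiagonal, $|\sigma(i)-i|\le 1$ for every $i$, so $\sigma$ is a product of disjoint adjacent transpositions. These are in bijection with matchings $M$ in the path graph $P_n$, where each edge $(i,i+1)\in M$ becomes a transposition and unmatched vertices are fixed points. Moreover, any doubly stochastic matrix with tridiagonal pattern is automatically symmetric (compare the row and column sum equations inductively, starting from the corner), so any RCDS matrix with pattern $A$ must be of the form $X(x)$.

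To derive (\ref{eq:tri}) as a necessary condition, apply Lemma \ref{lem:RCDS-nec-cond} to the entrywise positive $2\times 2$ principal submatrix of $X(x)$ on rows and columns $\{i,i+1\}$, for each $i=1,\ldots,n-1$. Its complementary submatrix is block diagonal with two tridiagonal blocks (on $\{1,\ldots,i-1\}$ and $\{i+2,\ldots,n\}$), each with positive main diagonal; hence the complementary submatrix has term rank $n-2$. The lemma yields $x_{ii}+x_{i+1,i+1}=2x_i$, and substituting $x_{ii}=1-x_{i-1}-x_i$ and $x_{i+1,i+1}=1-x_i-x_{i+1}$ (with $x_0=x_n=0$) gives exactly (\ref{eq:tri}). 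For sufficiency, the key identity $2x_i-x_{ii}-x_{i+1,i+1}=(x_{i-1}+4x_i+x_{i+1})-2$ implies, after rewriting $d_\sigma$ in terms of the matching $M$ associated to $\sigma$,
\[
d_\sigma(X(x)) \;=\; \sum_{i=1}^n x_{ii} \;+\; \sum_{(i,i+1)\in M}\bigl((x_{i-1}+4x_i+x_{i+1})-2\bigr),
\]
so (\ref{eq:tri}) forces every diagonal sum to equal the constant $\sum_{i=1}^n x_{ii}$.

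The system (\ref{eq:tri}) is $Tx=2e$ where $T=4I_{n-1}+S$ and $S$ is the adjacency matrix of the path $P_{n-1}$. Since $T$ is symmetric, strictly diagonally dominant, and positive definite (its eigenvalues are $4+2\cos(k\pi/n)>0$), the solution $x$ is unique.

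The main remaining obstacle is to verify that this unique $x$ actually yields a doubly stochastic matrix with zero set precisely $\xi(A)$, i.e., that $x_i>0$ and $x_{i-1}+x_i<1$ for every $i$. I would solve the recurrence using the characteristic roots $r_\pm=-2\pm\sqrt{3}$ (which satisfy $r_+r_-=1$ and $|r_+|<1$), obtaining the closed form
\[
x_i \;=\; \tfrac{1}{3}\Bigl(1-\tfrac{r^i+r^{n-i}}{r^n+1}\Bigr), \qquad r=-2+\sqrt{3},
\]
and check both inequalities by writing $r^n+1-r^i-r^{n-i}$ and the numerator of $1-(x_{i-1}+x_i)$ as products of elementary factors of the form $(1\pm s^k)$ with $s=2-\sqrt{3}\in(0,1)$; the parities of $i$, $n-i$, and $n$ determine which signs appear, and each such factor is strictly positive. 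Combining these steps, $A$ is the pattern of an RCDS doubly stochastic matrix, and that matrix is the uniquely determined $X(x)$.
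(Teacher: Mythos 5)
Your proposal is correct in outline and matches the paper on the first three steps: necessity of (\ref{eq:tri}) via Lemma \ref{lem:RCDS-nec-cond} applied to the $2\times 2$ blocks on rows and columns $\{i,i+1\}$, sufficiency via the observation that every permutation $P\le A$ is a product of disjoint adjacent transpositions (the paper cites \cite{Dahl04} for the ``replace $I_2$ by $L_2$'' description; you rederive it, and also justify the symmetry of tridiagonal doubly stochastic matrices, which the paper takes from \cite{Dahl04}), and uniqueness via strict diagonal dominance. Where you genuinely diverge is the hardest step, verifying that the unique solution satisfies $x_i>0$ and $x_{i-1}+x_i<1$: the paper runs Gaussian elimination on the tridiagonal system and proves by induction that the pivots lie in $(3.7,3.75)$ and hence $0.2<x_i<0.5$, whereas you solve the second-order difference equation in closed form, $x_i=\tfrac13\bigl(1-(r^i+r^{n-i})/(r^n+1)\bigr)$ with $r=-2+\sqrt3$, which I have checked is correct. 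Your approach buys an exact formula (and in fact the paper remarks that such a closed form exists but does not pursue it), at the cost of a case analysis on signs. One caveat: the positivity of $x_i$ does reduce cleanly to $(1-r^i)(1-r^{n-i})>0$ as you say, but the numerator of $1-(x_{i-1}+x_i)$, namely $1+r^n+(1+r)(r^{i-1}+r^{n-i})$, expands to five or six monomials and does \emph{not} factor into binomials of the form $1\pm s^k$; you will need to replace that step by a direct estimate (e.g., bounding $|(1+r)(r^{i-1}+r^{n-i})|\le 2(1+r)|r|$ for $1\le i\le n-1$ when both exponents are at least $1$, and treating the boundary exponents $0$ separately), which does go through since $|r|=2-\sqrt3$ is small. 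With that repair the argument is complete.
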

\begin{proof}
Let  $X=[x_{ij}] =X(x)\in \Omega^t_n$ and assume that $X$ is an RCDS of $A$. Then each entry $x_{ij}$ where $|i-j|\le 1$ is positive, or equivalently, 
\begin{equation}
\label{eq:submatrix-cond}
    x_i>0 \;\; (i \le n-1), \;\;\;   x_{i-1}+x_i<1 \;\; (i \le n), 
\end{equation}
where $x_0=x_n=0$. The only positive $2 \times 2$ submatrices in $X$ are those containing row and column $i$ and $i+1$ ($i=1, 2, \ldots, n-1$). For each of these submatrices the complementary submatrix has full term rank (as the main diagonal contains nonzeros). Therefore, as  $X$ is an RCDS, by Lemma \ref{lem:RCDS-nec-cond}, the following equations must hold
\begin{equation}
\label{eq:subm-1}
   x_i+x_i=(1-x_{i-1}-x_i)+(1-x_i-x_{i+1}),   \;\;(i \le n)
\end{equation}
which gives $(\ref{eq:tri})$. The coefficient matrix $C$ corresponding to the linear equations $(\ref{eq:tri})$ is strictly diagonally dominant, so the  system has a unique solution $x$. Below we show  that $x$ also satisfies (\ref{eq:submatrix-cond}). Moreover, $X=X(x)$ is in fact an RCDS of $A$ which is seen as follows. Consider the identity matrix and let $\alpha$ be its sum in $X$. Any other permutation matrix $P$ with $\xi(P) \subseteq \xi(A)$ is obtained by replacing some $2 \times 2$ submatrices being $I_2$  by $L_2$, see \cite{Dahl04}. However, each such interchange does not change the diagonal sum due to (\ref{eq:subm-1}). Thus all nonzero diagonals in $X$ have the same sum.

It only remains to show  that $x$ satisfies (\ref{eq:submatrix-cond}). This is done by solving $(\ref{eq:tri})$, using Gaussian elimination. Let $C=[c_{ij}]$ be the coefficient matrix, i.e., $c_{ii}=4$ ($i \le n$), $c_{i,i+1}=c_{i+1,i}=1$ ($i=1, 2, \ldots, n-1$), and $c_{ij}=0$ otherwise. Also let $b$ be the $n$-vector with all components being 2. The algorithm is: start with the augmented matrix $\left[ C \; \;b \;\right]$, and for $i=1,2, \ldots, n$, (a) multiply row $i$ by the inverse of the (current) entry $(i,i)$, and then, if $i<n$, (b) subtract that row from the next. Let $F=[f_{ij}]$ be the resulting $n \times (n+1)$ matrix. Then $f_{ii}=1$, and the remaining nonzeros are in positions $(i,i+1)$ ($i<n$)  and in the final column.  Let $h_i$ be the value in position $(i,i)$ of $E$ right after row $i-1$ has been subtracted from row $i$. We then get successively $h_1=4$, $f_{1,n+1} =1/2$, and 
\begin{equation}
\label{eq:recursion}
\begin{array}{rll}
  h_{i} &=4- 1/h_{i-1} &(i=2,3, \ldots, n),\\
  f_{i,i+1}&=1/h_i   &(i=1, 2, \ldots, n-1), \\
  f_{i,n+1}&=(2-f_{i-1,n})/h(i) &(i=2, 3, \ldots, n).
\end{array}  
\end{equation}
Moreover, the solution $x$ is found by back-substitution
\begin{equation}
\label{eq:x-tri}
\begin{array}{ll}
    x_i &=f_{i,n+1}-f_{i,i+1} x_{i+1} \\
        &=(2-f_{i-1,n+1}-x_{i+1})/h(i)  \;\;\;(i=n, n-1, \ldots, 1) 
\end{array}    
\end{equation}
where $x_{n+1}=0$. 

{\em Claim $1:$ $3.7<h_i<3.75$ $(i=2,3,\ldots, n)$.}
Proof of Claim 1: The function $g(h)= 4-1/h$ is strictly increasing for $h>0$ and a computation shows $3.7< g(3.7) < g(3.75) < 3.75$. The claim then follows by induction. 

{\em Claim $2:$  $0.4 \le f_{i,n+1} \le 0.5$ and $0.2 < x_i < 0.5$ $(i=1,2, \ldots, n)$.}
Proof of Claim 2: Assume $0.4 \le f_{i,n+1} < 0.5$ for some $i$. From (\ref{eq:recursion}) and Claim 1 
\[
  f_{i,n+1}=(2-f_{i-1,n})/h(i) \le (2-0.4)/3.7=0.4324<0.5
\]
and 
\[
  f_{i,n+1}=(2-f_{i-1,n})/h(i) \ge (2-0.5)/3.75=0.4.
\]
Thus, by induction, $0.4 \le f_{i,n+1} \le 0.5$ $(i=1,2, \ldots, n)$. Assume $0.2 < x_{i+1} < 0.5$ for some $i$, then  
\[
   x_i=(2-f_{i-1,n+1}-x_{i+1})/h(i) \le (2-0.4-0.2)/3.7=0.3784<0.5
\]
and
\[
   x_i=(2-f_{i-1,n+1}-x_{i+1})/h(i) \ge (2-0.5-0.5)/3.75=0.2667>0.2.
\]
By induction (backward on $i$), $0.2 < x_i < 0.5$ $(i=1,2, \ldots, n)$, and Claim 2 is proved.
Finally, Claim 2 shows that $x$ satisfies (\ref{eq:submatrix-cond}), so  the matrix $X(x)$ is doubly stochastic, as desired.
\end{proof}

One can show further properties of the solution $x$ of the linear system $(\ref{eq:tri})$, but this is not done here. In fact, the exact solution may be found by solving this as a linear second-order difference equation. Note, however, that the modified system where the first and last component on the right hand side is changed to $5/6$, has the solution $x'=(1/3, 1/3, \ldots, 1/3)$. The solution $x$ of $(\ref{eq:tri})$ is close to $x'$ (except near``the two ends").

In an attempt to generalize the result above for tridiagonal matrices we introduce the following notion.  
Let $T$ be a tree on vertices $1,2,\ldots,n$  and let $T^*$ be the {\it  loopy tree} obtained from $T$ by putting a loop at each pendent vertex. 
Let $A=A(T^*)=[a_{ij}]$ be the $n\times n$ adjacency matrix of $T^*$ with 1's on the main diagonal corresponding to the loops.  Note that $A$ is a symmetric (0,1)-matrix.  When $T$ is a path on $n$ vertices we obtain $A$ as in Theorem \ref{thm: tridiag}. With loops allowed, a {\it perfect matching} of $T^*$ is a collection of edges (including loops) that are vertex disjoint and meet all vertices. Perfect matchings of $T^*$ are in one-to-one correspondence with the permutation matrices $P\le A$ and thus their number is the permanent of $A$.

 We have the following lemma.
 
 \begin{lemma}\label{lem:symperm}
 Let $A$ be an $n\times n$ symmetric $(0,1)$-matrix, and let $G$ be the 
  loopy graph whose  adjacency matrix is  $A$.  Then every permutation matrix $P\le A$ is symmetric, that is, corresponds to a perfect matching of $G$, if and only if $G$ does not have any cycles of length $k\ge 3$.
  \end{lemma}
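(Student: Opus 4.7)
I would prove the two directions of the equivalence separately.

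The easier direction (no cycle of length $\ge 3$ in $G$ implies every $P \le A$ is symmetric) goes as follows. Given $P = P_\sigma \le A$, write $\sigma$ as a product of disjoint cycles. For each cycle $(i_1, i_2, \ldots, i_k)$ of $\sigma$, the conditions $\sigma(i_j) = i_{j+1 \bmod k}$ force $a_{i_j,\, i_{j+1 \bmod k}} = 1$ for every $j$, and symmetry of $A$ then gives the undirected edges $\{i_j, i_{j+1 \bmod k}\}$ in $G$. Since $i_1, \ldots, i_k$ are distinct, these edges form a $k$-cycle in $G$. Under the hypothesis, no such cycle with $k \ge 3$ exists, so every cycle of $\sigma$ has length $1$ or $2$; hence $\sigma$ is an involution and $P = P^T$.

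For the converse I argue contrapositively: given a cycle $C\colon v_1 v_2 \cdots v_k v_1$ in $G$ with $k \ge 3$, I construct a non-symmetric permutation matrix $P \le A$. The key ingredient is the cyclic shift $\tau$ on $V(C)$ given by $\tau(v_i) = v_{i+1 \bmod k}$, which itself is a $k$-cycle and hence not an involution. To produce a full permutation of $\{1, \ldots, n\}$ compatible with $A$, I would start from any involution $\sigma_0$ with $P_{\sigma_0} \le A$ (such a $\sigma_0$ exists in the applications intended in this section, namely the loopy trees $T^*$, where loops at pendant vertices combined with matchings on the interior provide a perfect matching). In the benign case where $\sigma_0$ pairs the vertices of $V(C)$ only among themselves (via loops or $2$-cycles internal to $V(C)$), the permutation using $\tau$ on $V(C)$ and $\sigma_0$ on $V \setminus V(C)$ is a valid non-symmetric $P \le A$.

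The main obstacle is the remaining case, where $\sigma_0$ pairs some $v_i \in V(C)$ with an external vertex $w_i \notin V(C)$: installing $\tau$ then leaves each such $w_i$ unmatched. I would resolve this through an augmenting-cycle argument in the bipartite graph $B$ associated with $A$ (vertex classes being two copies of $\{1, \ldots, n\}$, edges given by the $1$'s of $A$): combining the symmetric perfect matching corresponding to $\sigma_0$ with the non-symmetric partial matching corresponding to $\tau$ yields alternating cycles that can be re-routed to produce a valid full perfect matching of $B$ whose restriction to $V(C)$ remains $\tau$. The corresponding permutation retains the $k$-cycle on $V(C)$, hence is not an involution. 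This augmenting/re-routing step is where I expect the proof to require the most care, as one must verify that the re-routed matching stays entirely within the support of $A$.
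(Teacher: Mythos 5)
Your first direction (no cycles of length $\ge 3$ in $G$ implies every $P\le A$ is symmetric) is correct and is essentially the paper's own argument: the paper traces the orbit of a point that $\sigma$ moves non-involutively until it closes up into a graph cycle of length $k\ge 3$, which is the same as reading a long cycle off the disjoint cycle decomposition of $\sigma$, as you do. This is also the only direction of the lemma that is used later in the paper (for loopy trees).

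The genuine gap is in your converse, at exactly the step you flag. The augmenting/re-routing step cannot be carried out so that the restriction to $V(C)$ ``remains $\tau$'', and no repair is possible, because the implication ``$G$ has a cycle of length $k\ge 3$ $\Rightarrow$ some $P\le A$ is non-symmetric'' is false without further hypotheses. Concretely, take
\[
A=\left[\begin{array}{cccc}0&1&1&1\\1&0&1&0\\1&1&0&0\\1&0&0&0\end{array}\right],
\]
whose graph is the triangle on $\{1,2,3\}$ with a pendant edge $\{1,4\}$ and no loops. Row $4$ forces $\sigma(4)=1$ and column $4$ forces $\sigma(1)=4$, whence $\sigma(2)=3$ and $\sigma(3)=2$; the unique permutation matrix $P\le A$ is the symmetric $(1\,4)(2\,3)$, even though $G$ contains a $3$-cycle. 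In this example the symmetric difference of the perfect matching for $(1\,4)(2\,3)$ with the partial matching for the cyclic shift $\tau$ on $\{1,2,3\}$ is an alternating path passing through $V(C)$, so any re-routing necessarily destroys $\tau$. Your fallback of borrowing an involution $\sigma_0$ from ``the intended applications, namely loopy trees'' does not help: loopy trees have no cycles of length $\ge 3$, so the converse is vacuous exactly there. To be fair, the paper itself dismisses this direction with the one-line remark that a permutation matrix whose permutation has a cycle of length $\ge 3$ is not symmetric; like you, it never actually manufactures such a $P\le A$ from a graph cycle of $G$, and the example above shows that in general one cannot. So your instinct that this is where ``the proof requires the most care'' is right; it is the point where the statement, read literally, needs an additional hypothesis guaranteeing that a directed version of the cycle extends to a permutation matrix dominated by $A$.
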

  
  \begin{proof} If there are no permutation matrices $P\le A$, then the lemma is vacuously true. Assume that there is a permutation matrix $P\le A$. If $P$ is the identity matrix $I_n$, then $P$ is symmetric. Now assume that $P\ne I_n$.
  Let the set of vertices of $G$ be $\{1,2,\ldots.n\}$. Then $P$ determines  a bijection $f:\{1,2,\ldots,n\}\rightarrow \{1,2,\ldots,n\}$ such that (a) $\{i,f(i)\}$ is an edge of $G$ for all $i$
 and (b) there exists a $k$ such that $f(k)\ne k$. If $P$ is not symmetric, there exists $i_1\ne i_2$ such that $f(i_1)=i_2$ and $f(i_2)\ne i_1,i_2$.
  Since $f$ is a bijection,  there exists $i_3\ne i_1,i_2$ such that $f(i_2)=i_3$. Continuing like this we see that, since there are only finitely many vertices, there exists distinct  $i_1,i_2,\ldots,i_k$ with $k\ge 3$ such that 
  $f(i_j)=i_{j+1}$ for $1\le j\le k-1$ and $f(i_k)=i_1$. This implies that $G$ contains a cycle of length $k\ge 3$ that gives a permutation cycle of length $k\ge 3$ of $P$ and hence $P$ is not symmetric. The converse is clear since if $P$ has a cycle of length $k\ge 3$, then $P$ is not symmetric. 
  \end{proof}
  
 It  follows from Lemma \ref{lem:symperm} that if $X$ is an $n\times n$  RCDS matrix and $T^*$ is a loopy tree with $\xi(X)=\xi(A(T^*))$, then all diagonals of $X$ avoiding its 0's are symmetric. But not all such $X$ constructed as in (\ref{eq:permanent}) are RCDS matrices.
 
 \begin{example}{\rm \label{ex:symtree}
Let $T$ be the tree with vertices $1,2,\ldots,8$ and edges
\[\{1,2\},\{1,3\},\{1,4\}, \{2,5\},\{2,6\}, \{4,7\},\{4,8\}\}.\]
Let $T^*$ be the loopy tree obtained by putting loops at the pendent vertices $3,5,6,7,8$, and let $A$ be the adjacency matrix of $T^*$. Let $X$ be the sum of the permutation matrices $P\le A$. Then 
\[
X=\left[\begin{array}{c|c|c|c|c|c|c|c}
&2&4&2&&&&\\ \hline
2&&&&3&3&&\\ \hline
4&&4&&&&&\\ \hline
2&&&&&&3&3\\ \hline
&3&&&5&&&\\ \hline
&3&&&&5&&\\ \hline
&&&3&&&5&\\ \hline
&&&3&&&&5\end{array}\right].\]
Then $X$ has two diagonals neither of which contain any 0's and with different sums
\[5+5+2+2+4+3+3+5=29\mbox{ and }
5+3+3+4+4+3+3+5=30.\]

Now let $A$ be the adjacency matrix
\[\left[\begin{array}{c|c|c|c|c|c}
&1&1&1&&\\ \hline
1&&&&1&1\\ \hline
1&&1&&&\\ \hline
1&&&1&&\\ \hline
&1&&&1&\\ \hline
&1&&&&1\end{array}\right]\]
of a loopy tree $T^*$ with 5 perfect matchings.
Then
\[X=\frac{1}{5}\left[\begin{array}{c|c|c|c|c|c}
&1&2&2&&\\ \hline
1&&&&2&2\\ \hline
2&&3&&&\\ \hline
2&&&3&&\\ \hline
&2&&&3&\\ \hline
&2&&&&3\end{array}\right]\]
is an RCDS doubly stochastic with restricted diagonal sums equal to $\frac{14}{5}$.
}\hfill{$\Box$}\end{example}

\begin{example}
\label{ex:star-not-RCDS}
{\rm 
Let $n \ge 2$ and let $x_i\ge 0$ ($2 \le i \le n$). Define the symmetric $n \times n$ matrix $V_n=[v_{ij}]$ by 
$v_{1i}=v_{i1}=x_i$ and $v_{ii}=1-x_i$ ($2 \le i \le n$), and $x_{11}=1-\sum_{i=2}^n x_i$, while all other entries are zero. For instance, when $n=5$ the matrix is 
\[
V_5=
\left[
\begin{array}{c|c|c|c|c}
1-\sum_{i=2}^5 x_i&x_2&x_3&x_4&x_5\\ \hline
x_2&1-x_2& &\\ \hline
x_3&&1-x_3&& \\ \hline
x_4&&&1-x_4& \\ \hline
x_5&&&&1-x_5\\ 
\end{array}
\right].
\]

 $V_n$ is a doubly stochastic matrix when $ x_i \ge 0$ ($2 \le i \le n$) and $\sum_{i=2}^n x_i \le 1$. If all these inequalities are strict, the graph of the matrix is a star with edges $\{1,i\}$ ($2 \le i \le n$) and a loop  in every vertex $i$. Assume that  $X$ is an RCDS matrix. By Lemma \ref{lem:RCDS-nec-cond}, the following equations must hold
\[
   x_i+x_i=(1-x_i)+(1-\sum_{k=2}^n x_k),   \;\;(2 \le i  \le n)
\]
or equivalently
\[
   4x_i+\sum_{k \not = i} x_k = 2,   \;\;(2\le i \le n).
\]
The unique solution is $x_i=2/(n+2)$ ($2\le i \le n)$. However, $\sum_{i=2}^n x_i=2(n-1)/(n+2)$ which is $\le 1$ if and only if $n \le 4$. Thus, when $n \ge 5$, the corresponding matrix is not doubly stochastic  (the entry in position $(1,1)$ is negative) and this proves  that there is no RCDS doubly stochastic matrix with this pattern. The remaining cases $n \le 4$ correspond to the following matrices
\[
V_2=
\left[
\begin{array}{c|c}
1/2&1/2\\ \hline
1/2&1/2\\ 
\end{array}
\right], \;
V_3=
\left[
\begin{array}{c|c|c}
1/5&2/5&2/5\\ \hline
2/5&3/5&\\ \hline
2/5&&3/5 \\ 
\end{array}
\right], \;
V_4=
\left[
\begin{array}{c|c|c|c}
0&1/3&1/3& 1/3\\ \hline
1/3&2/3&&\\ \hline
1/3&&2/3& \\ \hline
1/3&&&2/3 \\ 
\end{array}
\right]
\]
and it is easy to check that each of these is a RCDS doubly stochastic matrix.
\endproof
}
\end{example} 

To summarize, we have shown that  when the (loopy) tree $T$ is a path, the corresponding adjacency matrix $A$ is the pattern of a RCDS doubly stochastic, while when $T$ is a star, $A$ is not an RCDS pattern for $n \ge 5$. Moreover, we gave two other trees, in either of these two categories. Based on this, one might guess that the loopy trees that correspond to RCDS patterns must satisfy some constraint on the maximum degree. It is an open question to characterize the loopy trees that correspond to RCDS patterns.

\section{More classes of  RCDS matrices}
\label{sec:ext}

From Theorem \ref{thm:Achilles} we immediately obtain the following result.

\begin{corollary}
\label{cor:RCDS-J}
 The only RCDS doubly stochastic matrix without any  zeros is $(1/n)J_n$.
\end{corollary}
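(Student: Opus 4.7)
The plan is to deduce this as an immediate consequence of Theorem \ref{thm:Achilles}(ii). Let $X \in \Omega_n$ be an RCDS doubly stochastic matrix with no zero entries, and let $\alpha$ denote the common value of its diagonal sums. Since $\xi(X) = \emptyset$, I can take $Z_X = \emptyset$, and the hypothesis that all diagonals of $X$ disjoint from $Z_X$ have sum $\alpha$ is precisely the RCDS assumption.

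Next I would introduce $Y = (1/n) J_n$ as the comparison matrix. Clearly $Y \in \Omega_n$, $\xi(Y) = \emptyset$, and every diagonal of $Y$ consists of $n$ entries each equal to $1/n$, so every diagonal sum equals $\beta = 1$. Taking $Z_Y = \emptyset$, we have $Z_X = Z_Y$, so Theorem \ref{thm:Achilles}(ii) applies and yields both $\alpha = \beta = 1$ and $X = Y = (1/n) J_n$.

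There is essentially no obstacle: the work has already been absorbed into Theorem \ref{thm:Achilles} (equivalently, into Lemma \ref{lem:gen}), and the corollary is just the special case where the distinguished zero sets are empty and the second matrix is the explicit uniform doubly stochastic matrix. The only thing to verify, which is trivial, is that $(1/n)J_n$ is itself an RCDS doubly stochastic matrix, which is needed so that the hypotheses of Theorem \ref{thm:Achilles} are satisfied for $Y$.
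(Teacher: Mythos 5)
Your proof is correct, and it is exactly the derivation the paper alludes to when it introduces the corollary with ``From Theorem \ref{thm:Achilles} we immediately obtain the following result'': take $Y=(1/n)J_n$, $Z_X=Z_Y=\emptyset$, check that every diagonal of $Y$ sums to $1$, and invoke part (ii) to get $\alpha=1$ and $X=Y$. The proof the paper actually writes out, however, is deliberately different: it is announced as ``an alternative proof of this corollary without applying Theorem \ref{thm:Achilles}.'' That argument uses Lemma \ref{lem:RCDS-nec-cond} to conclude that every $2\times 2$ submatrix of a zero-free RCDS matrix has equal cross-diagonal sums (the complementary submatrix automatically has full term rank since $X>0$), deduces that two columns agreeing in one row must agree everywhere, and then derives a contradiction from a pair of unequal columns by locating rows $i,i'$ with $a_{ij}>a_{ij'}$ and $a_{i'j}<a_{i'j'}$. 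Your route is shorter and leans on the inner-product machinery of Lemma \ref{lem:gen} (equivalently, Achilles' theorem, which itself rests on the Birkhoff decomposition of $X$ and $Y$ into convex combinations of permutation matrices); the paper's route is elementary and self-contained, showing the rigidity directly from the local $2\times 2$ condition. Both are valid; which one ``buys'' more is a matter of taste, since the paper wants to illustrate that the $2\times 2$ necessary condition alone already forces $X=(1/n)J_n$ in the zero-free case.
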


\begin{proof} We give an alternative proof of this corollary without applying Theorem \ref{thm:Achilles}.
 Let $A=[a_{ij}]$ be a RCDS doubly stochastic matrix with no zeros.   
  Consider a $2 \times 2$ submatrix $B$ of $A$. By Lemma \ref{lem:RCDS-nec-cond}, and as $A$ has no zeros, the two diagonals in $B$ have the same sum. 
  Let $i \le n$ and assume $a_{ij}=a_{ik}$ for some $j \not = k$. Then column $j$ and column $k$ must be equal. Assume $A \not = (1/n)J_n$. Since $A$ is doubly stochastic, at most $n-2$ columns are equal. Consider two unequal columns, say columns $j$ and $j'$. Then 
$a_{ij} \not = a_{i,j'}$ ($i \le n$). Then there are $i,i'$ such that 
 $a_{ij} > a_{ij'}$ and $a_{i'j} < a_{i'j'}$.  So, in the submatrix of $A$ consisting of rows $i$, $i'$ and columns $j$, $j'$, we have 
 \[
     a_{ij}+a_{i'j'}>a_{ij'}+a_{i'j}, 
 \]
 a contradiction.  This shows that the only possibility is $A=(1/n)J_n$.
\end{proof}

We next identify another class of RCDS doubly stochastic matrices.
Let $r$, $s$ and $n$ be positive integers satisfying $s<r<n$, and define the matrix $X=X^{(r,s,n)}=[x_{ij}] \in M_n$ by 
\begin{equation}
 \label{eq:rcds-zero-block}
    x_{ij} = 
    \left\{
      \begin{array}{ll}
          1/r & (i\le r, \,j \le s) \\
          (r-s)/(r(n-s))   &(i\le r, \,s<j \le n) \\
          0     &(r<i\le n, \,j \le s) \\
          1/(n-s)  &(r<i\le n, \,s<j \le n). \\
      \end{array}    
    \right.
\end{equation}

\begin{proposition}
\label{pr:RCDS-zero-block}
 $X^{(r,s,n)}$ is an RCDS doubly stochastic matrix for each $s<r<n$.
\end{proposition}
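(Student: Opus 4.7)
The plan is to verify the two defining properties of $X=X^{(r,s,n)}$ directly from its block structure, without invoking Theorem \ref{thm:RCDS-char}. Partition the row and column indices as $I_1=\{1,\dots,r\}$, $I_2=\{r+1,\dots,n\}$, $J_1=\{1,\dots,s\}$, $J_2=\{s+1,\dots,n\}$, so that $X$ has constant values $1/r$, $(r-s)/(r(n-s))$, $0$, $1/(n-s)$ on the four blocks $I_1\times J_1$, $I_1\times J_2$, $I_2\times J_1$, $I_2\times J_2$ respectively. Double stochasticity follows from a routine four-case line-sum check: e.g.\ each row in $I_1$ sums to $s(1/r)+(n-s)\cdot (r-s)/(r(n-s))=s/r+(r-s)/r=1$, and the remaining cases are analogous.

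The heart of the proof is showing that every diagonal avoiding the zero block has the same sum, and this reduces to a counting argument. Any admissible diagonal $\sigma$ must map every row in $I_2$ into $J_2$; since $|I_2|=n-r<n-s=|J_2|$ (using $r>s$), the remaining $r-s$ columns of $J_2$ must be hit by rows of $I_1$, and the $s$ columns of $J_1$ by the other $s$ rows of $I_1$. Thus \emph{every} admissible diagonal consists of exactly $s$ entries equal to $1/r$, exactly $r-s$ entries equal to $(r-s)/(r(n-s))$, and exactly $n-r$ entries equal to $1/(n-s)$. The diagonal sum is therefore the fixed constant
\[
\frac{s}{r}+\frac{(r-s)^2}{r(n-s)}+\frac{n-r}{n-s},
\]
independent of $\sigma$, so $X$ is RCDS. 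Existence of at least one admissible diagonal (so the statement is not vacuous) follows from Hall's condition, or more concretely by pairing rows $r+1,\dots,n$ with columns $s+1,\dots,s+(n-r)$ and the rest of $I_1$ with the rest of the columns.

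As a sanity check consistent with Theorem \ref{thm:RCDS-char}(i), one could alternatively exhibit explicit vectors $u=(\underbrace{0,\dots,0}_{r},\underbrace{\tfrac{s}{r(n-s)},\dots,\tfrac{s}{r(n-s)}}_{n-r})$ and $v=(\underbrace{\tfrac{1}{r},\dots,\tfrac{1}{r}}_{s},\underbrace{\tfrac{r-s}{r(n-s)},\dots,\tfrac{r-s}{r(n-s)}}_{n-s})$ and verify that $u_i+v_j$ reproduces the correct block value of $X$ wherever $X>0$. I do not expect any real obstacle; the result is essentially a combinatorial bookkeeping exercise, and the only step requiring care is the pigeonhole observation that the zero block forces the multiplicities of each block value in an admissible diagonal to be the same for every $\sigma$.
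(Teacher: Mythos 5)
Your proof is correct and follows essentially the same route as the paper's: a direct four-case line-sum verification of double stochasticity, followed by the counting/pigeonhole observation that the zero block forces every admissible diagonal to contain exactly $s$ entries equal to $1/r$, $r-s$ entries equal to $(r-s)/(r(n-s))$, and $n-r$ entries equal to $1/(n-s)$. Your extra remarks (non-vacuousness via Hall's condition, and the explicit $u,v$ realizing Theorem \ref{thm:RCDS-char}) are correct additions but do not change the argument.
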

\begin{proof}
All entries of $X=X^{(r,s,n)}$ are nonnegative. For each $j\le s$ the $j$'th column sum is $r \cdot(1/r)=1$, and for each $s<j\le n$ the $j$'th column sum is
\[
  r \cdot (r-s)/(r(n-s)) + (n-r) \cdot 1/(n-s) =(r-s+n-r)/(n-s)=1.
\]
Next, for each $i\le r$ the $i$'th row sum is 
\[
  s \cdot (1/r) + (n-s) \cdot (r-s)/(r(n-s)) =(s+r-s)/r=1.
\]
while for $r<i\le n$ the $i$'th row sum is $(n-s)\cdot (1/(n-s))=1$. Therefore $X$ is doubly stochastic. 

Consider a diagonal $D$ in $X$ avoiding zeros. From the first $s$ columns it contains $s$ times the entry $1/r$. Next $D$ contains additionally $r-s$ entries in the $r$ first rows and each such entry must be in the last $(n-s)$ columns and is therefore equal to $(r-s)/(r(n-s))$. Finally, $D$ contains $n-r$ additional entries in the last $n-s$ columns but from the last $n-r$ rows. Each such entry is $1/(n-s)$. Thus, the diagonal $D$ contains
\[
   1/r \;(s \;\mbox{\rm times}), \mbox{\rm and}\;
   (r-s)/(r(n-s)) \;((r-s) \;\mbox{\rm times}), \mbox{\rm and}\;
   1/(n-s) \;((n-r) \;\mbox{\rm times}).
\]
So, all diagonals contain exactly the same numbers, and their sums are equal. This shows that $X$ 
is an RCDS doubly stochastic matrix.
\end{proof}

Proposition \ref{pr:RCDS-zero-block} provides a construction of a fully indecomposable $n\times n$ RCDS matrix whose zeros form a $p\times q$ submatrix for any positive $p$ and $q$ with $p+q\le n-1$.

\begin{example}
\label{ex:lower-staircase}
{\rm 
Consider the  matrix $X =X^{(3,2,6)}\in \Omega_6$ given by
\[
X=
\left[
\begin{array}{c|c|c|c|c|c} 
1/3&1/3&1/12&1/12&1/12&1/12\\ \hline
1/3&1/3&1/12&1/12&1/12&1/12\\  \hline
1/3&1/3&1/12&1/12&1/12&1/12\\  \hline
0&0&1/4&1/4&1/4&1/4\\  \hline
0&0&1/4&1/4&1/4&1/4\\  \hline
0&0&1/4&1/4&1/4&1/4 
\end{array}
\right].
\]
Any diagonal avoidning zeros must contain the entries $1/3$, $1/3$ (from the first two columns), $1/12$ (from one of the first three rows), and $1/4$, $1/4$ and $1/4$ (from three of the last columns). \endproof
}
\end{example}

The class discussed in the previous proposition can be extended to matrices with staircase pattern.
A matrix is {\em constant } if all entries are equal (so it is a multiple of the all ones matrix). Let $k \ge 3$. Let $X$ be a $n \times n$ matrix of the form
\begin{equation}
\label{eq:zig-zag-matrix}
X=
\left[
\begin{array}{c|c|c|c|c|c} 
X_1&X_2&&&&\\ \hline
&X_3&X_4&&& \\ \hline 
&&X_5&X_6&& \\ \hline
&&&\ddots &\ddots& \\ \hline
&&&&X_k&X_{k+1}
\end{array}
\right]
\end{equation}
where $X_i$ ($i \le k+1$) are constant matrices and open space indicates a zero matrix. We permit $X_{k+1}$ to be void. The constant associated with $X_i$  is denoted by $c(X_i)$  ($i \le k+1$), so $X_i=c(X_i)J$. We call $X$ a {\em zig-zag matrix}. Let $X_i$ have dimension $r_i \times s_i$ ($i \le k+1$). 
Consider the following conditions on the dimensions
\begin{equation}
\label{eq:dim-cond}
    \sum_{i=1}^t s_i <\sum_{i=1}^t r_i <\sum_{i=1}^{t+1} s_i \;\;\;(t \le k).
\end{equation}

\begin{example}
\label{ex:lower-staircase2}
{\rm 
The following matrix $X$ is a zig-zag doubly stochastic matrix:
\[
X=
\left[
\begin{array}{c|c|c|c} 
X_1&X_2&&\\ \hline
&X_3&X_4& \\ \hline 
&&X_5&X_6 
\end{array}
\right]
=
\left[
\begin{array}{c||c|c||c|c||c} 
1/2&1/4&1/4&0&0&0\\ \hline
1/2&1/4&1/4&0&0&0\\  \hline \hline
0&1/4&1/4&1/4&1/4&0\\  \hline
0&1/4&1/4&1/4&1/4&0\\  \hline \hline
0&0&0&1/4&1/4&1/2\\  \hline
0&0&0&1/4&1/4&1/2 
\end{array}
\right].
\] \endproof
}
\end{example}

\begin{theorem}
\label{thm:RCDS-zero-block}
 Let $X$ be a doubly stochastic zig-zag matrix satisfying $(\ref{eq:zig-zag-matrix})$ and $(\ref{eq:dim-cond})$.    Then $X$ is an RCDS matrix.
\end{theorem}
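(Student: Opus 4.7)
The plan is to establish the RCDS property via a direct counting argument on the block structure. Let $R_t$ denote the number of rows in the $t$-th row-block (so $R_t = r_{2t-1} = r_{2t}$), $S_q$ the number of columns in the $q$-th column-block (so $S_1 = s_1$ and $S_q = s_{2q-2} = s_{2q-1}$ for $q \ge 2$), and $c_\ell = c(X_\ell)$ the constant value of the entries in block $X_\ell$. For any zero-avoiding diagonal $D_\sigma$ let $n_\ell(\sigma)$ be the number of entries of $D_\sigma$ inside $X_\ell$; since each block is constant, the diagonal sum equals $\sum_\ell n_\ell(\sigma)\, c_\ell$, so it suffices to show that the profile $(n_\ell)$ is independent of $\sigma$.

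Row coverage in row-block $t$ forces $n_{2t-1} + n_{2t} = R_t$; column coverage in an intermediate column-block $q$ forces $n_{2q-2} + n_{2q-1} = S_q$; and the endpoint column-blocks give $n_1 = S_1$ and, when $X_{k+1}$ is non-void, $n_{k+1} = S_{\mathrm{last}}$. The bipartite incidence graph whose vertices are the row- and column-blocks and whose edges are the blocks $X_\ell$ is a path (column-block $1$ -- row-block $1$ -- column-block $2$ -- row-block $2$ -- $\cdots$), so this system is triangular and propagating from $n_1 = S_1$ yields the closed form
\[
n_{2t-1} = \sum_{i=1}^t S_i - \sum_{i=1}^{t-1} R_i, \qquad n_{2t} = \sum_{i=1}^t R_i - \sum_{i=1}^t S_i ,
\]
which depends only on the dimensions of $X$. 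Hence all zero-avoiding diagonals share the same profile and the same sum, and $X$ is RCDS.

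The role of the dimension condition (\ref{eq:dim-cond}) is to guarantee that the uniquely determined $n_\ell$ are strictly positive so that the zig-zag pattern actually is the support of $X$ and zero-avoiding diagonals exist. Using the identities $\sum_{i=1}^t R_i = \rho_{2t}/2$ and $\sum_{i=1}^t S_i = (\sigma_{2t-1}+s_1)/2$, a short induction shows that the interlacing $\sigma_t < \rho_t < \sigma_{t+1}$ translates into $n_\ell > 0$ for each $\ell$, which is equivalent (via the double-stochasticity recurrence $c(X_1) = 1/r_1$, $c(X_{2t}) = (1 - s_{2t-1} c(X_{2t-1}))/s_{2t}$, $c(X_{2t+1}) = (1 - r_{2t} c(X_{2t}))/r_{2t+1}$) to positivity of each $c_\ell$. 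An essentially equivalent path is to apply Theorem \ref{thm:RCDS-char}(i), defining $u$ to be constant $a_t$ on row-block $t$ and $v$ constant $b_q$ on column-block $q$ and solving $a_t + b_q = c_\ell$ sequentially along the tree; the row/column sums of $Y(u,v)$ equal those of $X$, namely $1$, by double stochasticity. The main obstacle is the index bookkeeping between the block indexing $(R_t, S_q)$ and the individual-block indexing $(r_i, s_i)$, together with a minor case analysis for whether $X_{k+1}$ is void.
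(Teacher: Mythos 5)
Your argument is correct and is essentially the paper's own proof: both show that for any zero-avoiding diagonal the number of entries falling in each constant block $X_\ell$ is forced, by starting from $n_1=s_1$ in the first column-block and propagating the row-coverage and column-coverage equations along the zig-zag, so every such diagonal carries the same multiset of values. Your version merely systematizes this into an explicit triangular linear system with closed-form counts, and adds the (not strictly needed, but correct) observation that condition (\ref{eq:dim-cond}) is what makes these counts positive.
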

\begin{proof}
  Consider a diagonal $D$ in $X$ with no zeros.  
 Since $r_1>s_1$, we get $c(X_1)=1/r_1$ and $D$ contains one entry from each of the first $s_1$ columns of $X$.  So, $D$ has $s_1$ entries from $X_1$. Moreover, as $s_1<r_1<s_1+s_2$,  $D$ contains  the remaining $r_1-s_1$ entries in the $r_1$ first rows in the last $(n-s_1)$ columns, i.e., $D$ has $r_1-s_1$ entries in $X_2$, all equal to $c(X_2)$.
The remaining $s_2-(r_1-s_1)=s_1+s_2-r_1>0$ entries that $D$ contains in the columns in $X$ corresponding to  $X_2$ must all lie in $X_3$. This again implies that $D$ has a a fixed number of entries in $X_4$, etc. So, continuing like this, any diagonal contains a fixed number of entries in $X_i$ ($i \le k$). Therefore each such diagonal contains the same numbers, and then clearly their sums are equal.
\end{proof}

We proceed and identify a large class of RCDS matrices. They are constructed inspired by the procedure after Theorem \ref{thm:RCDS-char}. Let $\mc{A}_{k,n}$ be the class of $n \times n$ $(0,1)$-matrices with $k$ ones in every row and column. Consider a $n \times n$ $(0,1)$-matrix
\begin{equation}
 \label{eq:class3}
 A=[a_{ij}]=
\left[
\begin{array}{c|c} 
A_{1}&A_{2}\\ \hline
A_{3}&A_{4}\\ 
\end{array}
\right]
\end{equation}
where each of the blocks $A_i$ has size $p \times p$ ($i \le 4$) so $n=2p$. 
Assume that  $A_i \in \mc{A}_{k_i,p}$ where $k_i \le p$ $(i \le 4)$. 
 
Now, let  $u=(u_i)=(a_1, \ldots, a_1, a_2, \ldots, a_2)$ and $v=(v_j)=(b_1, \ldots, b_1, b_2, \ldots, b_2)$ where the first $p$ components are equal, in each of these vectors.  Let $X'=[x'_{ij}]$ be the $n \times n$ matrix where $x'_{ij}=u_i+v_j$  when $a_{ij}=1$, and $x'_{ij}=0$ otherwise ($i,j\le n$).   
The $i$'th row sum in  $X'$ is
\[
r_i(X')=
\left\{
\begin{array}{ll}
    k_1(a_1+b_1)+k_2(a_1+b_2)  & (i \le p)\\
    k_3(a_2+b_1)+k_4(a_2+b_2)  & (i>p),
 \end{array}
 \right.   
\]
and the $j$'th column sum in $X'$ is 
\[
s_j(X')=
\left\{
\begin{array}{ll}
    k_1(a_1+b_1)+k_3(a_2+b_1),  & (j \le p)\\
    k_2(a_1+b_2)+k_4(a_2+b_2)  & (j>p).
 \end{array}
 \right.   
\]
We want all these four sums to be equal; this gives three equations where one is redundant. By some simplification we get the equivalent system 
\begin{equation}
 \label{eq:2-4-system}
\begin{array}{ll}
    k_1(a_1+b_1)=k_4(a_2+b_2), \\
    k_2(a_1+b_2)=k_3(a_2+b_1).
\end{array}    
\end{equation}
The next theorem is obtained by finding a solution to these two  equations.

\begin{theorem}
\label{thm:class3}
 Let $A=[a_{ij}]$ be as in $(\ref{eq:class3})$ where $A_i \in \mc{A}_{k_i,p}$ and $k_i \le p$ $(i \le 4)$ satisfy $k_1+k_4=k_2+k_3$. Let $X'=[x'_{ij}]$ be the $n \times n$ matrix where $x'_{ij}>0$ precisely when $a_{ij}=1$ and 
 \[
x'_{ij}=
\left\{
\begin{array}{ll}
   k_4 ,  & (i,j \le p)\\
   k_3 ,  & (i \le p, \,j>p)\\
   k_2 ,  & (i > p, \, j\le p)\\
   k_1 ,  & (i > p,\, j>p).\\
  \end{array}
 \right.   
\]
Define $X=(1/(k_1k_4+k_2k_3)) \cdot X'$. Then $X$ is an RCDS doubly stochastic matrix and $A$ is the corresponding RCDS pattern.
\end{theorem}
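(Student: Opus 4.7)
The plan is to apply Theorem \ref{thm:RCDS-char}(i) directly by exhibiting vectors $u, v \in \mathbb{R}^n$ with a block-constant structure such that the associated matrix $Y(u,v)$ realizes the entries of $\alpha X$ on the support of $A$, where $\alpha := k_1 k_4 + k_2 k_3$. Specifically, I would take
\[
   u = (\underbrace{a_1, \ldots, a_1}_{p}, \underbrace{a_2, \ldots, a_2}_{p}),
   \qquad
   v = (\underbrace{b_1, \ldots, b_1}_{p}, \underbrace{b_2, \ldots, b_2}_{p}),
\]
mirroring the $2 \times 2$ block partition of $A$. For $u_i + v_j$ to match the prescribed block values of $X'$ on the support, the four equations
\[
  a_1 + b_1 = k_4, \quad a_1 + b_2 = k_3, \quad a_2 + b_1 = k_2, \quad a_2 + b_2 = k_1
\]
must hold simultaneously.

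The crucial observation is that this overdetermined system has the single consistency requirement $(a_1+b_1) - (a_1+b_2) = (a_2+b_1) - (a_2+b_2)$, which simplifies to $k_4 - k_3 = k_2 - k_1$, i.e., exactly the hypothesis $k_1 + k_4 = k_2 + k_3$. Hence a solution exists; for concreteness one may take $a_1 = 0$, $b_1 = k_4$, $b_2 = k_3$, $a_2 = k_1 - k_3 = k_2 - k_4$. With these values, $Y(u,v)$ agrees with $X'$ entrywise on the support of $A$.

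Next I would verify the common line sum of $Y(u,v)$. A row $i \le p$ has exactly $k_1$ nonzero entries in the $A_1$-block (each equal to $k_4$) and exactly $k_2$ nonzero entries in the $A_2$-block (each equal to $k_3$), contributing $k_1 k_4 + k_2 k_3 = \alpha$. The analogous count for a row $i > p$ gives $k_3 k_2 + k_4 k_1 = \alpha$, and likewise both column-sum cases reduce to $\alpha$. The positivity $u_i + v_j > 0$ wherever $a_{ij} = 1$ is automatic whenever the $k_i$ indexing nonempty blocks are positive, which is forced by $A_i \in \mathcal{A}_{k_i,p}$ being nonzero. With equations (\ref{eq:eq-line-sums}) satisfied, Theorem \ref{thm:RCDS-char}(i) concludes that $X = (1/\alpha) Y(u,v)$ is an RCDS doubly stochastic matrix with pattern $A$.

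There is really no hard step: the entire content of the theorem reduces to recognizing that the rank-two additive structure $(u_i + v_j)$ with only two distinct values in each of $u$ and $v$ can simultaneously realize four distinct block values precisely under the rank-one compatibility condition $k_1 + k_4 = k_2 + k_3$. The only bookkeeping concern is checking each of the two row-cases and two column-cases against $\alpha$, which are all symmetric restatements of the same identity $k_1 k_4 + k_2 k_3 = k_2 k_3 + k_1 k_4$. Once Theorem \ref{thm:RCDS-char} is invoked, the RCDS property is delivered for free.
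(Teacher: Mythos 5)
Your proof is correct and follows essentially the same route as the paper: both set up block-constant vectors $u,v$, observe that the four equations $a_1+b_1=k_4$, $a_1+b_2=k_3$, $a_2+b_1=k_2$, $a_2+b_2=k_1$ are consistent exactly because $k_1+k_4=k_2+k_3$, compute the common line sum $\alpha=k_1k_4+k_2k_3$, and invoke Theorem \ref{thm:RCDS-char}(i). (One small slip in your positivity remark: the entries in the block occupied by $A_m$ equal $k_{5-m}$, so their positivity is forced by the \emph{opposite} block being nonempty; this is harmless since full indecomposability of $A$ requires all four $k_i\ge 1$ anyway.)
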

\begin{proof}
The equations (\ref{eq:2-4-system}) is a linear system of two equations in four unknows $a_1$, $a_2$, $b_1$, $b_2$. 
One way to find a solution is to solve (if possible) 
\[
  a_1+b_1=k_4, \; a_2+b_2=k_1, \; a_1+b_2=k_3, \; a_2+b_1=k_2 \; 
\]
or 
\[
  a_1=k_4-b_1, \; a_2=k_1-b_2, \; (k_4-b_1)+b_2=k_3, \; (k_1-b_2)+b_1=k_2.
\]
The last two equations  give
\[
   b_2-b_1=k_3-k_4, \; \;b_2-b_1=k_1-k_2
\]
which is consistent as  $k_1+k_4=k_2+k_3$. Choose
\[
   b_1=0, \;\; b_2=k_3-k_4, \;\; a_1=k_4, \;\; a_2=k_2.
\]
This is a solution of (\ref{eq:2-4-system}). The corresponding matrix $X'$ is then as described in the theorem, and, by the discussion before the theorem, all row and column sums in $X'$ are equal to some number $\alpha$. We compute $\alpha=k_1k_4+k_2k_3$. Therefore $X=(1/\alpha)X'$ is doubly stochastic and has the same pattern as $X$. Finally, by Theorem \ref{thm:RCDS-char}, $X$ has all diagonals sums that avoid zeros in $A$ equal, so the conclusion of the theorem follows.
\end{proof}

Note that it should not be difficult to find {\em all} solutions of the (\ref{eq:2-4-system}), and possible more RCDS patterns.

\begin{example}
\label{ex:class3}
{\rm 
Let $k_1=1$, $k_2=2$, $k_3=3$, $k_4=4$, $p=5$ and $n=2p=10$. Then, by Theorem \ref{thm:class3}, the following matrix is an RCDS doubly stochastic matrix
\[
X=(1/10)
\left[
\begin{array}{c|c|c|c|c||c|c|c|c|c} 
 &&&4&&&3&3&& \\ \hline
 &&&&4&&3&&&3 \\ \hline
 &4&&&&3&&&3& \\ \hline
 &&4&&&&&&3&3 \\ \hline
 4&&&&&3&&3&& \\ \hline \hline
 &2&2&2&&1&1&&1&1 \\ \hline
 2&2&&&2&&1&1&1&1 \\ \hline
 2&&2&&2&1&1&1&&1 \\ \hline
 &&2&2&2&1&&1&1&1 \\ \hline
 2&2&&2&&1&1&1&1& \\ 
\end{array}
\right].
\]
}
\end{example} \endproof

\section{The diagonal width}
\label{sec:diag-width}

To conclude, we mention a  generalization of  the concept of having equal diagonal sums. Let $X \in \Omega_n$. Define the {\em diagonal width} DW$(X)$ of $X$ by
\begin{equation}
 \label{eq:DW-def}
   {\rm DW}(X)=\max\{d^X_P: P \in \mc{P}_n, \;\xi(X) \subseteq \xi(P)\}- \min\{d^X_Q: Q \in \mc{P}_n, \; \xi(X) \subseteq \xi(Q)\}. 
\end{equation}
This  is the maximum difference between two diagonal sums in $X$, where the diagonals avoid the zeros of $X$. Thus, $X$ is an RCDS (doubly stochastic matrix) if and only if DW$(X)=0$. 


The following result characterizes DW$(X)$, and may be used to analyze the diagonal width. The proof uses the duality ideas  in the proof of Theorem \ref{thm:RCDS-char}.

\begin{theorem}
\label{thm:DSP-char} 
Let $X \in \Omega_n$. Then 
\begin{equation}
\label{width-exp}
   {\rm DW}(X)=\theta^*(X)-\theta_*(X)
\end{equation}
where 
\[
 \begin{array}{rcr} \vspace{0.2cm}
   \theta^*(X)&= &\min\{\sum_i u_i + \sum_j v_j: u_i+v_j \ge x_{ij} \;((i,j) \not \in \xi(X))\}, \\
    \theta_*(X)&= &\max\{\sum_i u_i + \sum_j v_j: u_i+v_j \le x_{ij} \;((i,j) \not \in \xi(X))\}, 

 \end{array}
\]
\end{theorem}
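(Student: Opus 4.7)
The plan is to mirror the duality argument used in the proof of Theorem \ref{thm:RCDS-char}(ii), applying it twice: once to the maximum diagonal sum to obtain $\theta^*(X)$, and once to the minimum diagonal sum to obtain $\theta_*(X)$. Subtracting then yields (\ref{width-exp}).

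First, I would rewrite the maximum in the definition (\ref{eq:DW-def}) as a linear programming problem. The maximum diagonal sum over permutation matrices $P$ with $\xi(X)\subseteq \xi(P)$ equals the optimal value of
\[
\begin{array}{lrl}
  \mbox{maximize} & \sum_{i,j} x_{ij}y_{ij} & \\
  \mbox{subject to} & \sum_j y_{ij}=1 & (i\le n) \\
                    & \sum_i y_{ij}=1 & (j\le n) \\
                    & y_{ij}\ge 0, \;\; y_{ij}=0 \mbox{ if } (i,j)\in \xi(X). &
\end{array}
\]
The coefficient matrix of the equality constraints is totally unimodular (as in the proof of Theorem \ref{thm:RCDS-char}), so this LP has an integral optimal solution, which must be a permutation matrix $P$ with $\xi(X)\subseteq\xi(P)$. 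Thus the LP value equals the first term in (\ref{eq:DW-def}). By LP duality, this value equals
\[
   \min\Big\{\sum_i u_i+\sum_j v_j : u_i+v_j\ge x_{ij} \;\; ((i,j)\notin \xi(X))\Big\}=\theta^*(X).
\]
I would verify feasibility of both LPs: the primal is feasible by Birkhoff's theorem since $X\in\Omega_n$ is a convex combination of permutation matrices supported in the complement of $\xi(X)$, and the dual is feasible by choosing $u_i$ and $v_j$ sufficiently large.

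The argument for the minimum diagonal sum is analogous, just replacing ``maximize'' by ``minimize'' in the primal, which flips the inequality in the dual to $u_i+v_j\le x_{ij}$ and turns the dual objective into a maximization, yielding $\theta_*(X)$. Finally, subtracting the two equalities gives $\mbox{DW}(X)=\theta^*(X)-\theta_*(X)$, as claimed. There is no genuine obstacle here: the only point demanding care is the restriction of the variables $y_{ij}$ (and corresponding dual constraints) to positions outside $\xi(X)$, which is handled exactly as in the proof of Theorem \ref{thm:RCDS-char}(ii); once that is set up, total unimodularity and strong LP duality do all the work.
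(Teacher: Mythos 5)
Your proposal is correct and follows essentially the same route as the paper: both arguments reduce the maximum and minimum diagonal sums to assignment LPs over the positions outside $\xi(X)$, invoke total unimodularity to ensure an integral (permutation-matrix) optimum, and apply strong LP duality to identify the two values with $\theta^*(X)$ and $\theta_*(X)$. The only cosmetic difference is that you dualize the maximization LP directly, whereas the paper obtains the maximum-side identity by negating $X$ and reusing the minimization duality via a change of variables; your explicit feasibility check is a small bonus the paper leaves implicit.
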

\begin{proof}
Recall from the proof of Theorem \ref{thm:RCDS-char} that the linear optimization problem
\begin{equation}
\label{eq: assignment1}
\begin{array}{lrl}  \vspace{0.05cm}
 \mbox{\rm minimize}    &\sum_{i,j} x_{ij} y_{ij} \\ \vspace{0.05cm}
 \mbox{\rm subject to}   &\sum_j y_{ij}=1 &(i \le n) \\  \vspace{0.05cm}
                                      &\sum_i y_{ij}=1 &(j \le n) \\  \vspace{0.05cm}
                                      &y_{ij} \ge 0 &((i,j) \not \in \xi(X)).
\end{array}
\end{equation}
has optimal value $\min\{d^X_P: P \in \mc{P}_n\}$, the minimum diagonal sum in $X$. Here there is a  variable $y_{ij}$ for each $(i,j) \not \in \xi(X)$, $i,j \le n$. Moreover, by the duality theorem 
\[
   \min\{d^X_P: P \in \mc{P}_n\} =\theta^*(X)
\]
where $\theta_*(X)$ is the optimal value of the dual problem 
\begin{equation}
\label{eq: dual-assignment1}
\begin{array}{lrl}  \vspace{0.05cm}
 \mbox{\rm maximize}    &\sum_i u_i + \sum_j v_j \\  \vspace{0.05cm}
 \mbox{\rm subject to}   &u_i+v_j \le x_{ij} &((i,j) \not \in \xi(X)). \\  \vspace{0.05cm}
 \end{array}
\end{equation}
We use this duality relation to find an expression for $\max\{d^X_P: P \in \mc{P}_n\}$:
\[
\begin{array}{ll} \vspace{0.05cm}
  \max&\{d^X_P: P \in \mc{P}_n\} 
    = \max\{\langle X,P\rangle: X \in \Omega_n\}           \\  \vspace{0.05cm}
    &= -\min\{\langle -X,P\rangle: X \in \Omega_n\}           \\  \vspace{0.05cm}
    &=-\max \{\sum_i u_i + \sum_j v_j:  u_i+v_j \le -x_{ij} \;((i,j) \not \in \xi(X))\}   \\  \vspace{0.05cm}
    &=-\max \{-\sum_i (-u_i) + \sum_j (-v_j):  (-u_i)+(-v_j) \ge  x_{ij} \;((i,j) \not \in \xi(X))\}   \\  
    &=\min \{\sum_i u_i + \sum_j v_j:  u_i+v_j \ge  x_{ij} \;((i,j) \not \in \xi(X))\}    \\  \vspace{0.05cm}
    &= \theta^*(X)
\end{array}
\]
by change of variables. This gives (\ref{width-exp}). 
\end{proof}

\begin{example} 
\label{ex:DW-1} 
{\rm 
Let $X \in \Omega_2$ so
\[
X=
\left[
\begin{array}{cc}
  1-\alpha & \alpha \\
    \alpha & 1-\alpha
\end{array}
\right].
\] 
Assume $\alpha \ge 1/2$ (by symmetry). If $\alpha=1$, clearly  DW$(X)=0$, so assume $\alpha<1$. Then there are two diagonals and their sums are $2(1- \alpha)$ and $2\alpha$. Then DW$(X)=2\alpha-2(1-\alpha)=4\alpha-2$.
\endproof
}
\end{example}

The previous theorem may be used to find upper bounds on DW$(X)$ for a given $X \in \Omega_n$, as the following corollary shows. 

\begin{corollary}
\label{cor:DW-ub}
Let $X \in \Omega_n$. 
Let  $u_i$, $v_i$ and $u'_i$, $v'_i$  be such that 
\[
    u_i+v_j \le x_{ij} \le u'_i+v'_j \;\;\;((i,j) \not \in \xi(X)). 
\]
Then 
\[
   {\rm DW}(X)\le \sum_i (u_i-u'_i) + \sum_j (v_i-v'_i). 
\]
\end{corollary}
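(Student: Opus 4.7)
The plan is to apply Theorem \ref{thm:DSP-char} directly, which rewrites ${\rm DW}(X)$ as the difference $\theta^*(X)-\theta_*(X)$ of the optimal values of two linear programs in the variables $(u,v)$; once this representation is in hand, the corollary reduces to a one-line feasibility observation on each LP.

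First, I would note that by hypothesis the pair $(u,v)$ satisfies $u_i+v_j\le x_{ij}$ for every $(i,j)\notin \xi(X)$, which is precisely the feasibility constraint of the maximization LP defining $\theta_*(X)$. Consequently $\theta_*(X)\ge \sum_i u_i+\sum_j v_j$. Symmetrically, the pair $(u',v')$ satisfies $u'_i+v'_j\ge x_{ij}$ for every $(i,j)\notin\xi(X)$, which is exactly the feasibility constraint of the minimization LP defining $\theta^*(X)$, and therefore $\theta^*(X)\le \sum_i u'_i+\sum_j v'_j$.

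Subtracting the first inequality from the second and invoking Theorem \ref{thm:DSP-char} then gives
\[
{\rm DW}(X)=\theta^*(X)-\theta_*(X)\le \Bigl(\sum_i u'_i+\sum_j v'_j\Bigr)-\Bigl(\sum_i u_i+\sum_j v_j\Bigr)=\sum_i(u'_i-u_i)+\sum_j(v'_j-v_j),
\]
which is the claimed bound (read with the primed quantities on the positive side, so that the right-hand side is manifestly nonnegative).

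There is essentially no obstacle: the corollary is a direct consequence of the LP representation in Theorem \ref{thm:DSP-char}, amounting to the weak-duality fact that any feasible primal solution lower-bounds a max and any feasible dual solution upper-bounds a min. The only minor point requiring care is bookkeeping the sign conventions in the displayed inequalities so that the two feasibility statements are applied to the correct program.
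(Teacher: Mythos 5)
Your proof is correct and takes essentially the same approach as the paper, whose entire proof is the single line ``This follows directly from Theorem \ref{thm:DSP-char}''; you have simply spelled out the two feasibility (weak-duality) observations that make this immediate. You are also right to flag the sign issue: the bound as displayed in the corollary should read $\sum_i(u'_i-u_i)+\sum_j(v'_j-v_j)$, and your reading with the primed quantities on the positive side is the intended one.
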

\begin{proof}
  This follows directly from Theorem \ref{thm:DSP-char}.
\end{proof}

\end{document}